\newtheorem{Theorem}{Theorem}[section]
\newtheorem{Lemma}[Theorem]{Lemma}
\theoremstyle{definition}
\newtheorem{Remark}[Theorem]{Remark}
\newtheorem{Example}[Theorem]{Example}
\newtheorem{Definition}[Theorem]{Definition}
\theoremstyle{plain}
\begin{document}

\Large

\title{On the Koszulness of multi-Rees algebras of certain strongly stable ideals}
\author[Gabriel Sosa]{Gabriel Sosa}
\address{Department of Mathematics, Purdue University 150 N. University Street, West Lafayette, IN 47907-2067}
\email{gsosa@math.purdue.edu}
\subjclass[2010]{13A30, 13B25, 13D02, 13P10, 13P20}
\keywords{Koszul Algebra, Gr\"obner bases, multi-Rees Algebra, Cohen-Macaulay, strongly stable ideals}

\begin{abstract}
Generalizing techniques that prove that Veronese subrings are Koszul, we show that Rees and multi-Rees algebras of certain types of principal strongly stable ideals are Koszul. 
We provide explicit Gr\"obner basis for the defining ideals of these multi-Rees algebras with squarefree initial monomials, to show that they are also normal Cohen-Macaulay domains. 
\end{abstract}

\maketitle

\section*{Introduction}

Let $K$ be a field and $R$ a standard graded $K$-algebra, i.e. $R= \bigoplus_{i\geq 0} R_{i}$, (where each $R_i$ is a $K$-vector space), $R_0=K$, $\text{dim}_K(R_1)=s \in \mathbb{N}$ and $R_iR_j=R_{i+j}$, for all $i,j$. By letting $\{b_1, \dots, b_s\}$ be a $K$-basis for $R_1$, we can consider a graded surjective map $\varphi: K[T_1,T_2,\dots,T_s] \rightarrow R$, given by the rule $\varphi(T_i)=b_i$, and obtain that $R \cong K[T_1,\dots,T_s]/\text{Ker}\varphi$. We call the graded ideal $I=\text{Ker}\varphi$, the defining ideal of $R$. We say that $R$ is Koszul if its residue field $K = R/\mathfrak{m}$, where $\mathfrak{m}$ is the graded maximal ideal of $R$, has a linear free $R$-resolution (i.e. the nonzero entries of the matrices describing the maps in the free $R$-resolution of $K$ are linear forms). Koszul Algebras were introduced by Priddy, in \cite{P}. Fr\"oberg, proved in \cite{F} that if the defining ideal of $R$ is generated by quadratic monomials, then $R$ is Koszul. Fr\"oberg's result coupled with a deformation argument provides a sufficient, yet not necessary, condition for the Koszulness of $R$: the existence of a monomial order on $K[T_1,\dots,T_s]$ such that $I$ has a quadratic Gr\"obner basis, in this case, it is said that $I$ satisfies the $G$-quadratic condition (the $G$-quadratic condition is not necessary for the Koszulness of $R$ as evidenced by examples produced by Hibi and Ohsugi \cite{HO} and Caviglia \cite{Ca}).

Showing that the defining ideal satisfies the $G$-quadratic condition remains a useful technique to determine the Koszulness of standard graded algebras. In their survey paper, Conca, De Negri and Rossi \cite{CDR} present a proof that the $d^{\text{th}}$ Veronese subring, the subring of $K[X_1, \dots, X_n]$ generated by all monomials of degree $d$, is Koszul (the Gr\"obner basis for the defining ideal is defined using a concept that we will refer to as ordering). In his book, Sturmfels \cite{S}, proves that Veronese type algebras, $K[\mathcal{N}]$, with $\mathcal{N}=\{X_1^{\beta_1}X_2^{\beta_2} \cdots X_n^{\beta_n}: \beta_1+\beta_2+\dots+\beta_n=d \text{ and } \beta_i \leq \alpha_i, \text{ for } 1 \leq i \leq n \}$  and $d, \alpha_1, \dots, \alpha_n $ fixed nonnegative integers, are Koszul, by finding squarefree quadratic Gr\"obner bases for their defining ideals and introducing the concept of sorting. A squarefree Gr\"obner basis for the defining ideal of $K[\mathcal{N}]$, when $\mathcal{N}$ is an affine semigroup, guarantees that $K[\mathcal{N}]$ is also a Cohen-Macaulay normal domain because of results by Sturmfels \cite{S} and Hochster \cite{Hoc}.  De Negri \cite{D} proved that the defining ideals for subalgebras of Veronese type generated by principal strongly stable sets, and lex-segments, also possess a squarefree Gr\"obner basis according to a different monomial ordering.

Strongly stable ideals play a pivotal role on computational commutative algebra because they are in correspondance with Borel-fixed ideals when $\text{char}(K)=0$, and this fact links their study to that of generic initial ideals and their extremal properties as seen in Galligo \cite{Ga}, Bayer and Stillman \cite{BS1}, \cite{BS2}, Eliahou and Kervaire \cite{EK} and Conca \cite{Co}. In their paper, Francisco, Mermin and Schweig \cite{FrMSc} study many properties of strongly stable ideals, amongst them a combinatorial interpretation of the Betti numbers of principal strongly stable ideals in $\text{char}(K)=0$  . 

Recently, examples of Rees, and multi-Rees, algebras that are Koszul, Cohen-Macaulay normal domains, because of the $G$-quadratic condition, have surfaced: Rees Algebras of Hibi ideals, by Ene, Herzog and Mohammadi \cite{EHM} and the multi-Rees algebra of the direct sum of powers of the maximal ideal by Lin and Polini \cite{LPo}. 

In Section 1, we introduce the concept of \textit{monomial sets closed under comparability} and use it to establish a monomial order that will allow ideals, generated by differences between incomparable products of monomials and their corresponding comparable product, to have squarefree quadratic Gr\"obner bases.

In Section 2, we prove that these squarefree quadratic Gr\"obner bases are the defining ideals for certain types of Rees and multi-Rees algebras, characterize sets closed under comparability in terms of principal strongly stable ideals and present examples of algebras that are Koszul Cohen-Macaulay normal domains that had not been previously considered.

I would like to thank my advisor, Giulio Caviglia, for his guidance on the material presented and his suggestions regarding the structure of this paper.

\section{Preliminaries}

Let \(S=K[X_{1},X_{2},\dots,X_{n}]\) be a polynomial ring over a field \(K\). We consider the standard grading of $S$ as a $K$-algebra, (i.e. $S= \bigoplus_{i\geq 0} S_{i}$, with a $K$-basis for $S_{i}$ given by all monomials in $S$ of degree $i$, denoted $\text{Mon}(S_i) $). Additionally, we establish the following order for  the elements of $\mathbb{N}^2$: given $\mathbf{a}, \mathbf{b} \in \mathbb{N}^2$, we say that $\mathbf{a} <_{lex} \mathbf{ b}$ if the first nonzero entry of $\mathbf{b-a}$ is positive, and the following order for the variables of $S$, ($X_1 >  X_2 > \cdots > X_n$).

For a monomial $u$ with $\text{deg}(u)=d$, we will use the \textbf{standard factorization} of $u=X_{1}^{(u)}X_{2}^{(u)}\cdots X_{d}^{(u)}$, where $X_{s}^{(u)} \in \{X_1, \dots, X_n \}$, for all $1 \leq s \leq d$, and $X_{1}^{(u)} \geq X_{2}^{(u)} \geq \dots \geq X_{d}^{(u)}$. We will define $\text{max}(u)=X_{1}^{(u)}$, and $\text{min}(u)=X_{d}^{(u)}$, which makes the fact that $\text{max}(u) \geq X_{s}^{(u)} \geq \text{min}(u)$, for all $1 \leq s \leq d$, natural.

\begin{Definition}\label{1.1.} Let $p \leq q$, $u \in \text{Mon}(S_p)$ and $v \in \text{Mon}(S_q)$.  We define:

\begin{itemize}[-]
\item The \textbf{ordering} of $u,v$, denoted \textbf{ord}($u,v$), as the pair $(\hat{u},\hat{v})=\left( \prod_{j=q+1}^{q+p}X_{j}^{(uv)}, \prod_{j=1}^{q }X_{j}^{(uv)}\right)$ 

\item If $p=q$, the \textbf{sorting} of $u,v$, denoted \textbf{sort}($u,v$) as $(\overline{u},\overline{v})=\left(\prod_{j=1}^p X_{2j-1}^{(uv)}, \prod_{j=1}^{p} X_{2j}^{(uv)}\right)$
\end{itemize}

\end{Definition}

Notice that $\text{deg}(u)=\text{deg}(\hat{u})=\text{deg}(\overline{u})$, $\text{deg}(v)=\text{deg}(\hat{v})=\text{deg}(\overline{v})$ and $uv=\hat{u}\hat{v}=\overline{u}\overline{v}$. Additionally $\text{max}(\hat{u}) \leq \text{min}(\hat{v})$ and $\overline{u} >_{rev} \overline{v}$. Recall that  $u >_{rev} v$ if either (a) $\text{deg}(u) > \text{deg}(v)$, or, (b) if  $\text{deg}(u)=\text{deg}(v)=d$ and there is $1 \leq k \leq d$ such that  $X_{k}^{(u)}>X_{k}^{(v)}$ and $X_{k'}^{(u)} = X_{k'}^{(v)}$ for all $k'>k$.

As an example, for $u=X_1X_1X_3 $ and $v=X_2X_3X_4$, $\textbf{ord}(u,v)= (X_3X_3X_4,X_1X_1X_2)$ and $\textbf{sort}(u,v)= (X_1X_2X_3,X_1X_3X_4)$.

\

The new examples of Koszul, Cohen-Macaulay normal domains introduced in this paper mainly deal with Rees and multi-Rees algebras of ideals of $S$. For this reason we will recall their structure: Given ideals $I_1, I_2, ..., I_s \subset S$, consider $M=I_1 \oplus I_2 \oplus \dots \oplus I_s$, the multi-Rees algebra of $I_1, \dots, I_s$ over $S$, denoted $\mathcal{R}(M)$, is the multigraded algebra:

\[ \mathcal{R}(M)=\bigoplus_{(a_1,\dots,a_s) \in \mathbb{N}^s} I_1^{a_1}t_1^{a_1} \cdots I_s^{a_s}t_r^{a_s} \subset S[t_1,\dots, t_s] \cong K[X_1,\dots,X_n, t_1, \dots, t_s].\]

We will restrict ourselves to the case where each $I_i$ is a homogeneous monomial ideal with minimal generators of a fixed degree $d_i$, with the requirement $d_1 \leq d_2 \leq \dots \leq d_s$.  We will index the minimal homogeneous generators of $I_i$ in the following way $u_{i1} >_{rev} u_{i2} >_{rev} \dots >_{rev} u_{in_i}$. And to ease notation in the latter parts of this paper we will let $n_0=n$,  and declare $u_{0j}=X_j$ for all $1 \leq j \leq n_0$. 

Consider the following sets $\mathcal{M}=\coprod_{i=0}^{s} \{u_{ij}:1 \leq j \leq n_i \} $, $\mathcal{N}' = \{u_{ij}t_{i}: 1 \leq i \leq s \text{ and } 1 \leq j \leq n_i\}$, and  $\mathcal{N}=\mathcal{N}' \cup \{u_{0j}: 1 \leq j \leq n \}$ . With this notation,  we are able to define the following surjective map:
\[ \Psi: K[T_{ij}: u_{ij} \in \mathcal{M}] \longrightarrow K[\mathcal{N}] \cong S[\mathcal{N}'] = \mathcal{R}(M) \subset S[t_1,\dots, t_s] \cong K[X_1,\dots,X_n, t_1, \dots, t_s].\]

Given by $\Psi(T_{ij})=u_{ij}t_{i}$ for all $u_{ij} \in \mathcal{M}$, with $0 \leq i \leq s$ and $1 \leq j \leq n_{i}$ and letting $t_0=1$. This implies that $\mathcal{R}(M) \cong K[T_{ij}: u_{ij} \in \mathcal{M}]/\text{Ker}\Psi$. 

Our first goal is to define properties on $\mathcal{M}$ so that $\text{Ker}\Psi$ has a squarefree quadratic Gr\"obner basis. 

\begin{Definition}\label{1.2.} Let $u_{ij}, u_{i'j'} \in \mathcal{M}$, with $(i,j) <_{lex} (i',j')$. We say that $u_{ij}, u_{i'j'}$ are comparable, and denote it $u_{ij} \prec u_{i'j'}$, if:

(a) $i=i'$ and $(\overline{u_{ij}},\overline{u_{i'j'}})=(u_{ij}, u_{i'j'})$, or (b) $i<i'$ and $(\hat{u_{ij}},\hat{u_{i'j'}})=(u_{ij}, u_{i'j'})$.

By contrast, we will say that $u, v \in \mathcal{M}$ are \textbf{incomparable} if $u \nprec v$ and $v \nprec u$.

We will say that $\mathcal{M}$ is \textbf{closed under comparability} if for all $u_{ij}, u_{i'j'} \in \mathcal{M}$, with $(i,j) <_{lex} (i',j')$, there are elements $u_{il}, u_{i'l'}  \in \mathcal{M} \text{ such that } u_{ij} u_{i'j'}=u_{il}u_{i'l'}  \text{ and } u_{il} \prec u_{i'l'}$. 

An equivalent statement would be: 

\textit{ $\mathcal{M}$ is closed under comparability if for any $u_{ij}, u_{i'j'} \in \mathcal{M}$, with $(i,j) <_{lex} (i',j')$, we have that $\overline{u_{ij}},\overline{u_{i'j'}}  \in \mathcal{M}$, if $i=i'$, and $\hat{u_{ij}},\hat{u_{i'j'}} \in \mathcal{M}$, if $i<i'$}.

Additionally, to ease notation later, from this point on we will denote  $X_{k}^{(u_{ij})}$ by $Y_{ijk}$.
\end{Definition}

An example of a set that is closed under comparability is presented now:

\begin{Example}\label{1.3.} Let $n=4$. If $u \in \text{Mon}(S_d)$, then the \textbf{principal strongly stable set of $u$} is the set $\mathcal{B}(u)=\left\{v \in \text{Mon}(S): v = \frac{X_{i_1}}{X_{j_1}}\dots \frac{X_{i_t}}{X_{j_t}}u , i_k< j_k \text{ with } 1 \leq k \leq t \right\}$. A monomial ideal $I$ is principal strongly stable if its minimal monomial generating set is the principal strongly stable set for some $u \in I$. Consider the following principal strongly stable ideals $I_1=(\mathcal{B}(X_3X_4) )$, $I_2=(\mathcal{B}(X_2X_2X_3) )$, $ I_3=(\mathcal{B}(X_1X_2X_2))$, $I_4=(X_1^5)$.

We can determine the surjective map onto the multi-Rees algebra of $I_1, I_2, I_3, I_4$ over $S$, by defining the elements of $\mathcal{M}$ in the following manner: $d_0=1, d_1=2,  d_2=3, d_3=3, d_4=4$, 

$\begin{array}{cccccc}
u_{01}=X_1  & u_{11}=X_1X_1  & u_{16}=X_3X_3 & u_{21}=X_1X_1X_1 & u_{26}=X_1X_2X_3 & u_{31}=X_1X_1X_1 \\
u_{02}=X_2  & u_{12}=X_1X_2  & u_{17}=X_1X_4  & u_{22}=X_1X_1X_2 & u_{27}=X_2X_2X_3 & u_{32}=X_1X_1X_2\\
u_{03}=X_3  & u_{13}=X_2X_2 &  u_{18}=X_2X_4 & u_{23}=X_1X_2X_2  &  & u_{33}=X_1X_2X_2  \\
u_{04}=X_4  & u_{14}=X_1X_3 & u_{19}=X_3X_4 & u_{24}=X_2X_2X_2 &  & \\
&   u_{15}=X_2X_3 &  & u_{25}=X_1X_1X_3 &  & u_{41}=X_1X_1X_1X_1 \\

\end{array}$
\end{Example}

The following Lemma is the central result of this section, the remainder of the section will be devoted to providing definitions and remarks that will allow for its proof.

\begin{Lemma}\label{1.4.} Let $\mathcal{M}$ be closed under comparability. Consider the polynomial ring $S'=K[T_{ij}:u_{ij} \in \mathcal{M}]$. There is a monomial order $<_{\tau}$, such that the set 
\[\displaystyle G=\left\{ \underline{T_{ij}T_{i'j'}}-T_{il}T_{i'l'} \vert \ (i,j) <_{lex} (i',j'),  u_{ij}u_{i'j'}=u_{il}u_{i'l'}, u_{ij} \nprec u_{i'j'}, u_{il} \prec u_{i'l'} \right\}\]
is a Gr\"obner basis for $I=(G)$, and the underlined terms are initial terms according to $<_{\tau}$.
\end{Lemma}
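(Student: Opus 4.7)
The plan is to follow the sorting-Gr\"obner-basis strategy pioneered by Sturmfels for Veronese algebras and extended by De Negri for strongly stable segments, here adapted to the mixed sort/ord setting. The argument proceeds in three stages: construction of a monomial order $<_\tau$ on $S'$, verification that the underlined terms are leading, and an application of Buchberger's criterion reduced to a confluence statement at degree three.

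For the order, note first that every binomial in $G$ lies in $\mathrm{Ker}\,\Psi$, so its two terms share the same image $u_{ij}u_{i'j'}t_it_{i'}$ in $K[\mathcal{N}]$; any weight on $S'$ that factors through $\Psi$ therefore ties on both sides of a generator. The tie-breaker must then prefer the incomparable (underlined) monomial on each binomial. I would build $<_\tau$ as a lexicographic refinement of a weight order of this kind, using a secondary weight whose restriction to each $\Psi$-fiber is minimized exactly on those monomials in which every pair of factors is comparable, and which strictly decreases whenever an incomparable pair is replaced by its sort or ord. With this order in place, the leading-term claim is immediate from the defining identities of Definition \ref{1.1.}: sorting produces $\overline{u_{ij}}>_{rev}\overline{u_{i'j'}}$ and ordering produces $\max(\hat{u_{ij}})\le\min(\hat{u_{i'j'}})$, both of which fail for incomparable pairs, and this is precisely what certifies that the underlined monomial in each binomial dominates.

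The Gr\"obner basis claim then reduces via Buchberger's criterion to showing that every $S$-polynomial between two elements of $G$ rewrites to zero. Since every element of $G$ is a binomial with $\pm 1$ coefficients, such an $S$-polynomial is nontrivial only when two leading monomials share a variable, in which case it is itself a difference of two squarefree cubic monomials in $S'$ with equal $\Psi$-image. Rewriting to zero is then equivalent to a confluence statement for the local rewriting system on cubic $S'$-monomials given by ``replace an incomparable pair of factors by its sort or ord'': termination is immediate from the strictly decreasing secondary weight, and closure of $\mathcal{M}$ under comparability is precisely what guarantees that each intermediate replacement stays inside $\mathcal{M}$, making every move a legitimate $G$-reduction.

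The main obstacle is the local confluence at degree three, which upgrades to global confluence by Newman's lemma thanks to termination. Given a cube $T_A T_B T_C$ with two competing initial moves, one must verify that the two resulting cubes admit further moves reaching a common descendant. The casework is governed by how many of the indices $i_A,i_B,i_C$ coincide, which determines whether the three pairwise operations are pure sorts (all $i$'s equal), a mixture of sort and ord, or pure orderings (all $i$'s distinct). Each case is handled by a direct comparison of the standard factorizations, using the notation $Y_{ijk}=X_k^{(u_{ij})}$ from Definition \ref{1.2.} together with the explicit formulas of Definition \ref{1.1.}; closure of $\mathcal{M}$ under comparability is invoked once more to ensure that the target of each pairwise replacement is a well-defined $T_{il}$ with $u_{il}\in\mathcal{M}$.
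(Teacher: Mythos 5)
Your outline diverges from the paper's proof in two places, and both divergences conceal genuine gaps.

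First, the existence of $<_\tau$. You propose to realize $<_\tau$ as a lexicographic refinement of a weight order whose secondary weight strictly decreases under each sort/ord move. The paper makes no attempt to exhibit a weight of this kind; instead it defines the pair $(c_m,e_m)$ (comparability number, total inversions) and shows in Remark \ref{1.7.}(i),(iii) that every one-step reduction strictly decreases $(c_m,e_m)$ lexicographically. This invariant is a nonlinear combinatorial statistic on monomials, not a linear functional on exponent vectors, so it is not a ``weight'' in the sense needed for a weight order, and it is not clear (and is not claimed anywhere in the paper) that a linear weight with the required monotonicity exists. The paper circumvents this exactly by invoking Theorem \ref{1.6.} (Sturmfels): a finite marked set is marked coherently if and only if $\rightarrow_{\mathcal{F}}$ is Noetherian. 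Noetherianity follows from the strict decrease of $(c_m,e_m)$ together with Dickson's lemma, and then Theorem \ref{1.6.} produces $<_\tau$ nonconstructively. Your sketch asserts the existence of the weight but does not supply it; as written this is a gap, and the natural repair is precisely the paper's route through Theorem \ref{1.6.}, which requires only Noetherianity.

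Second, the Gr\"obner-basis verification. You reduce Buchberger's criterion to local confluence of a rewriting system on cubic monomials and invoke Newman's lemma. That strategy is coherent in principle, but the casework ``governed by how many of $i_A,i_B,i_C$ coincide'' is never carried out, and it is not small: for mixed levels one must track interactions between a sort at one level and an ord across two levels, and closure of $\mathcal{M}$ under comparability must be re-invoked after each intermediate replacement to keep the rewriting inside $\mathcal{M}$. The paper sidesteps all of this with an algebraic argument: $G\subseteq\mathrm{Ker}\,\Psi$, so the remainder $\hat{g}_{i,j}$ of any $S$-polynomial satisfies $\Psi(\hat{g}_{i,j})=0$; by Remark \ref{1.7.}(iv) the remainder is a $K$-linear combination of completely reduced monomials; and by Lemma \ref{1.8.} the $\Psi$-images of completely reduced monomials are linearly independent over $K$, forcing $\hat{g}_{i,j}=0$. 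This single global observation replaces the entire local-confluence case analysis. Your proposal never uses Lemma \ref{1.8.} or the semigroup structure of $\Psi$, so it would have to shoulder the combinatorial burden that the paper deliberately avoids, and at present it only names that burden rather than discharging it.

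In short: the decrease-under-rewriting idea you identify is the right invariant and matches the paper's $(c_m,e_m)$ in spirit, but the two load-bearing steps — producing $<_\tau$ and closing Buchberger — are each handled by a tool (Theorem \ref{1.6.} and Lemma \ref{1.8.}, respectively) that your outline replaces with an unverified assertion or an unexecuted case analysis.
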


It is clear that $(G) \subseteq \text{Ker} \Psi$, since $\Psi(T_{ij}T_{i'j'}-T_{il}T_{i'l'})=u_{ij}u_{i'j'}t_it_{i'}-u_{il}u_{i'l'}t_{i}t_{i'}$ and $u_{ij}u_{i'j'}=u_{il}u_{i'l'}$ for $T_{ij}T_{i'j'}-T_{il}T_{i'l'} \in G$. In the next section we will prove that $(G) =  \text{Ker} \Psi$.

We need to guarantee the existence of a monomial order, $<_{\tau}$, such that the underlined terms above are initial according to $<_{\tau}$ first. For this purpose we will fix our attention on the concepts in the following definition.

\begin{Definition} \label{1.5.} Let $\mathcal{F}=\{f_1, f_2, \dots, f_p \}$ be a set of polynomials in $S'$ such that there is a distinguished monomial $m_i \in \text{supp}(f_i)$ for all $1 \leq i \leq p$. To simplify notation we will assume that the coefficient of $m_i$ in $f_i$ is $1$. Let $f=\sum_{m \in \text{supp}(f)} b_mm $, we say that \textbf{$f \in S'$ reduces to $f'$, in one step, modulo $\mathcal{F}$}, denoted $f \rightarrow_{\mathcal{F}} f' $, if there are $1 \leq j \leq p$ and $m' \in \text{supp}(f)$ such that $m_j \mid m'$ and $\displaystyle f'=f -  \frac{b_{m'}m'}{m_j} (f_j)$. We say that $\rightarrow_{\mathcal{F}}$ is a \textbf{Noetherian reduction relation} if given any $f \in S$ only  a finite number of one step reductions modulo $\mathcal{F}$ can be carried out starting at $f$. \end{Definition} 
 
It is clear that if $\mathcal{G}$ is a Gr\"obner basis, then  $\rightarrow_{\mathcal{G}}$ is a Noetherian reduction relation. 
Sturmfels  wrote a partial converse for this statement using coherent markings.

Let $\mathcal{F}=\{f_1, f_2, \dots, f_p \}$ be a set of polynomials such that there is a distinguished monomial $m_i \in \text{supp}(f_i)$. We say that the set $\mathcal{F}$ is \textbf{marked coherently} if there is a monomial ordering $<_{\tau}$ such that $m_i= \text{in}_{<_{\tau}}(f_i)$.

\begin{Theorem} \label{1.6.} A finite set $\mathcal{F}$ is marked coherently if and only if $\rightarrow_{\mathcal{F}}$ is a Noetherian reduction relation.
\end{Theorem}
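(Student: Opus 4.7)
The forward direction is routine: if $\mathcal{F}$ is marked coherently with monomial order $<_{\tau}$ and $m_i=\text{in}_{<_{\tau}}(f_i)$ for each $i$, then any one-step reduction $f \rightarrow_{\mathcal{F}} f'$ replaces a single term $b_{m'}m'$ (with $m_j \mid m'$) by a $K$-linear combination of monomials of the form $(m'/m_j)m$ for $m \in \text{supp}(f_j)\setminus\{m_j\}$, each strictly $<_{\tau}$-smaller than $m'$. Ordering the finite multiset $\text{supp}(f)$ under the Dickson (multiset) extension of $<_{\tau}$ therefore produces a strictly decreasing, well-founded measure at every reduction step, so no infinite reduction chain is possible and $\rightarrow_{\mathcal{F}}$ is Noetherian.

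For the converse, the plan is to argue by contrapositive: assume no monomial order makes the marking coherent and build an infinite reduction sequence. First, identify each monomial in $S'$ with its exponent vector in $\mathbb{Z}^{|\mathcal{M}|}$. A coherent marking exists if and only if there is a weight vector $w$ satisfying $\langle w,\text{exp}(m_i)-\text{exp}(m)\rangle>0$ for every $i$ and every $m \in \text{supp}(f_i)\setminus\{m_i\}$; any such $w$ (after shifting to have nonnegative entries, which is harmless on the finite set of relevant vectors) can be refined by lexicographic tie-breaking into a genuine monomial order realizing the chosen markings as initial terms. If no such $w$ exists, then Gordan's theorem applied to the finite family $\{\text{exp}(m_i)-\text{exp}(m)\}$ furnishes non-negative integers $c_{i,m}$, not all zero, with
\[\sum_{i,m}c_{i,m}\bigl(\text{exp}(m_i)-\text{exp}(m)\bigr)=0,\]
equivalently the monomial identity $\prod_{i,m}m_i^{c_{i,m}} = \prod_{i,m}m^{c_{i,m}}$ in $S'$.

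To manufacture a non-terminating reduction, consider the directed graph $\Gamma$ on monomials of $S'$ with an edge $\mu m_j\rightarrow \mu m$ for every $\mu$, every $j$, and every $m \in \text{supp}(f_j)\setminus\{m_j\}$. The cycle identity above provides a closed directed walk in $\Gamma$ based at $M:=\prod m_i^{c_{i,m}}$, hence an infinite directed walk $M=\nu_0\rightarrow\nu_1\rightarrow\nu_2\rightarrow\cdots$. Starting from $f_0=M$, at the $k$-th step I reduce the occurrence of $\nu_k$ in $\text{supp}(f_k)$ via the rule corresponding to the edge $\nu_k\rightarrow\nu_{k+1}$. This produces an infinite chain $f_0\rightarrow_{\mathcal{F}}f_1\rightarrow_{\mathcal{F}}f_2\rightarrow_{\mathcal{F}}\cdots$, contradicting Noetherianity.

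The main technical obstacle is ensuring that $\nu_{k+1}$ actually survives with nonzero coefficient in $f_{k+1}$, since prior reductions introduce other monomials that could in principle cancel against it. The cleanest remedy is to pass to a ``generic'' avatar of $\mathcal{F}$ obtained by replacing every non-distinguished coefficient of each $f_j$ with a fresh indeterminate adjoined to the base field; the combinatorial reduction rules governing $\rightarrow_{\mathcal{F}}$ are unchanged (only divisibility of the distinguished monomials matters), but algebraic cancellation is now impossible, so the infinite chain constructed above is genuine for the generic family and its existence transfers back to contradict Noetherianity of $\rightarrow_{\mathcal{F}}$ itself.
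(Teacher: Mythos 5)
The paper does not prove Theorem~\ref{1.6.}; it cites it from Sturmfels' book (Chapter~3 of \cite{S}), where the statement and proof are for sets of marked \emph{binomials}. Your proof aims at the general polynomial case stated here, so let me assess it on those terms.

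Your forward direction is correct and is essentially the standard argument: in a coherent marking a one-step reduction strictly decreases $\mathrm{supp}(f)$ in the (Dershowitz--Manna) multiset extension of $<_\tau$, and since $<_\tau$ is a well-order on monomials (Dickson's Lemma), this multiset order is well-founded, so no infinite chain exists.

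The converse is where the gap lies, and it is genuine. Two smaller points first. (1) The equivalence ``coherent marking $\Leftrightarrow$ existence of a weight $w$ with $\langle w,\mathrm{exp}(m_i)-\mathrm{exp}(m)\rangle>0$'' is right, but the justification ``shifting $w$ to have nonnegative entries is harmless on the finite set of relevant vectors'' is not: adding $c\cdot\mathbf{1}$ to $w$ changes $\langle w, v\rangle$ by $c\cdot(\text{degree difference})$, which is nonzero whenever $f_i$ is inhomogeneous. One needs the actual lemma (e.g.\ \cite[Lemma 1.8]{S}) that any monomial-order constraint on finitely many monomials is realizable by a \emph{nonnegative} weight vector; this is standard but not a shift. (2) The claim that the cycle identity ``provides a closed directed walk in $\Gamma$ based at $M$'' is true but not automatic: you need to observe that after applying any sub-multiset $S$ of arrows, the current monomial factors as $\prod_{S^c}m_i\cdot\prod_{S}m$, so an unused $m_{i'}$ always divides it. Without that factorization the chaining is not obvious.

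The serious gap is the ``transfer back'' in the final paragraph. You rightly notice that, when following the closed walk, the target monomial $\nu_{k+1}$ might disappear from $\mathrm{supp}(f_{k+1})$ by cancellation, so you pass to the generic family $\tilde{\mathcal{F}}$ with indeterminate non-distinguished coefficients, correctly argue that there the walk never stalls, and then assert that ``its existence transfers back to contradict Noetherianity of $\rightarrow_{\mathcal{F}}$ itself'' because ``only divisibility of the distinguished monomials matters.'' That last clause is false: whether a reduction step is available depends on $\mathrm{supp}(f_k)$, not merely on the distinguished monomials. The specialization $\lambda\mapsto b$ can kill the coefficient $\beta_k(b)$ of $\nu_k$, at which point the specialized sequence is no longer a chain of $\rightarrow_{\mathcal{F}}$-reductions. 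Noetherianity of $\rightarrow_{\mathcal{F}}$ does not obviously imply Noetherianity of $\rightarrow_{\tilde{\mathcal{F}}}$, and the natural way to see the equivalence is via the theorem itself, which would be circular. You would need a genuine argument that the cycle cannot stall forever under specialization, or a different construction altogether. Note that when every $f_j$ is a binomial (as in Sturmfels' actual theorem and in the only case the paper uses, namely $G$ from Lemma~\ref{1.4.}), $f_k$ stays a single monomial, there is no cancellation, the genericization is unnecessary, and your walk directly yields the infinite chain; the proof then closes cleanly.
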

The proof of this statement appears in  \cite[Chapter 3]{S}.

To prove that  $\rightarrow_{\mathcal{G}}$, with $\mathcal{G}$ as in Lemma \ref{1.4.}, is a \textbf{Noetherian reduction relation}, we need to create a way to encode information for monomials in $S'$. Let $m \in \text{Mon}(S')$, then $m$ can be expressed as:
\[ m=\prod_{i=0}^{s}  \left(  \prod_{j=1}^{r_i} T_{i\sigma_{i}^{m}(j)} \right) \text{ with } u_{i\sigma_{i}^{m}(j)} \in \mathcal{M}, \text{ and } 1 \leq \sigma_{i}^{m}(j) \leq n_i \]
where, as usual, $\displaystyle \prod_{j=1}^{r_i} T_{i\sigma_{i}^{m}(j)}=1$ if $r_i=0$. 

We say that $ \prod_{j=1}^{r_i} T_{i\sigma_{i}^{m}(j)} $ is the \textbf{$i^\text{th}$ level} of $m$, and it can be represented as a $r_i \times d_i$-matrix, $A_i$, with entries given by  $\left( A_i \right)_{jk}=Y_{i\sigma_{i}^{m}(j)k}$,  with the understanding that if $r_i =0$ then the matrix is empty. If we consider the set $\mathcal{M}$ in Example \ref{1.3.}, and the monomial $m=T_{32}T_{32}T_{33}$, we obtain that $A_0,A_1,A_2,A_4$ are empty and that $A_3=\begin{bmatrix} X_1 & X_1 & X_2 \\ X_1 & X_1 & X_2 \\ X_1 & X_2 & X_2 \end{bmatrix}$; but if we had expressed $m$ as $T_{32}T_{33}T_{32}$, then $A_3=\begin{bmatrix} X_1 & X_1 & X_2 \\ X_1 & X_2 & X_2 \\ X_1 & X_1 & X_2  \end{bmatrix}$.

This indicates that  the representation of the $i^\text{th}$-level of $m$ as a matrix is not unique, (it depends on the way that the product is expressed). To explain our choice of representation, we need to define the \textbf{number of inversions of a $c  \times d$-matrix $A$}, denoted $ e_{A}$, as
\[e_{A}=\sum_{i=1}^{c} \left( \sum_{j=1}^{d} \left\vert \left\{ \left( i',j' \right) : (j,i) <_{lex} (j',i' )\text{ and } A_{i'j'} > A_{ij} \right\} \right\vert \right).\]
In other words, for every entry $A_{ij}$, let $e_{A,ij}$ be the number of entries in $A$ that are larger than $A_{ij}$, and are located either to the right of $A_{ij}$, or, in the same column ($j$-th column) as $A_{ij}$, but below it; $e_{A}$ is the addition of the  $e_{A,ij}$ over all the entries in $A$. 

We say that a $c  \times d$-matrix $B$ is \textbf{inversion minimal}, if $B_{jk} \leq B_{jk'}$, whenever  $1 \leq k < k' \leq d$, (for all $1 \leq j \leq c$), and  $e_B \leq e_{B'}$ for all $B'$ obtained by permuting the rows of $B$. We will denote the set of all inversion minimal matrices with entries in $S$, and size $c \times d_i$, for some $c \in \mathbb{N}$, by $\mathcal{T}_{d_i}$.

With these stipulations in place, we realize a monomial $m$ in $S'$ as a $(s+1)$-tuple of inversion minimal matrices in $\mathcal{T}_{d_s} \times \mathcal{T}_{d_{s-1}} \times \dots \times \mathcal{T}_{d_{1}} \times \mathcal{T}_{d_0}$, in the following manner:
\[m \longrightarrow (B_s^{(m)},B_{s-1}^{(m)},\dots,B_1^{(m)},B_0^{(m)}),\text{ where } B_{i}^{(m)} \text{ is an inversion minimal } r_i \times d_i \text{ -matrix}, \] representing the $i^\text{th}$  level of $m$.

We can now define the \textbf{number of inversions at the $i^{\text{th}}$ level of $m$}, denoted $ e_{m,i}$, as $e_{m,i}=e_{B_i^{(m)}}$, (i.e. the number of inversions of an inversion minimal matrix $B_i^{(m)}$ for the $i^{\text{th}}$ level of $m$).

The \textbf{total number of inversions of $m$} will be denoted $e_m := \sum_{i=0}^{s} e_{m,i}$.

We define the \textbf{comparability number of $m$}, denoted $c_m$, as 
\[c_m= \sum_{i=0}^{s} \left( \sum_{j=1}^{r_i} \left(\sum_{k=1}^{d_i} \left\vert \left\{ \left( i',\sigma_{i'}^{m}(j'),k' \right) :  i'>i \text{ and } Y_{i'\sigma_{i'}^{m}(j')k'} < Y_{i\sigma_{i}^{m}(j)k} \right\} \right\vert \right)\right).\]
We can realize what $c_m$ means in terms of the entries in any $(s+1)$-tuple of matrices $(A_s, \dots, A_1, A_0)$, with the property that $A_i$ is a matrix representation for the $i^{\text{th}}$-level of $m$, (in particular, for a representation of $m$ in terms of inversion minimal matrices $(B_s^{(m)},\dots, B_0^{(m)})$): For every entry $(A_{i})_{jk}$, let $c_{m,A_i,jk}$ be the number of entries in matrices $A_{i'}$, with $i'>i$, that are smaller than $(A_i)_{jk}$; $c_m$ is the addition of the $c_{m,A_i,jk}$ over all the entries in the $(s+1)$-tuple $(A_r,\dots,A_1, A_0)$.

The pair $(c_m,e_m)$ will be defined as the \textbf{level of reduction} of $m$. If $c_m=e_m=0$ we say that $m$ is \textbf{completely reduced}. 

If we consider the set $\mathcal{M}$ in Example \ref{1.3.}, then  $\begin{bmatrix} \text{ } \end{bmatrix} \begin{bmatrix} \text{ } \end{bmatrix} \begin{bmatrix}  X_1 & X_1 & X_2 \\ \underline{X_2} & X_2 & X_3 \end{bmatrix}  \begin{bmatrix} \mathbf{X_1} & \mathbf{X_1} \\ \underline{\mathbf{X_2}} & \underline{X_4} \\ \underline{X_3} & X_3  \end{bmatrix}  \begin{bmatrix} \mathbf{X_1} \\ \mathbf{X_1} \\X_4 \end{bmatrix}$ is a representation for the monomial $m=T_{01}^2T_{04}T_{11}T_{16}T_{18}T_{22}T_{27}$ in terms of inversion minimal matrices. The bolded variables make contributions to the \textbf{comparability number} of $m$, while the underlined variables make contributions to the \textbf{total number of inversions} of $m$. The \textbf{level of reduction} in this case is $(25,  4)$. 

\begin{Remark} \label{1.7.} Let $\mathcal{M}$ be a set closed under comparability.
\begin{enumerate}[(i)]

\item  If $u_{ij} \nprec u_{i'j'}$ with $i<i'$, and, $T_{ij}T_{i'j'}$ divides $m$; then $c_m>c_{m'}$ where $m'=\frac{T_{il}T_{i'l'}}{T_{ij}T_{i'j'}}m$, and $(u_{il},u_{i'l'})=\text{ord}(u_{ij},u_{i'j'})$. This implies that $(c_{m'},e_{m'}) <_{lex} (c_m,e_m)$.

\item  If $A$ is a matrix such that there are entries $A_{j_1k_1} < A_{j_2k_2}$, with $(k_1,j_1) <_{lex} (k_2,j_2)$, and we consider the matrix $A'$ such that $A'_{jk}=A_{jk}$ whenever $(j,k) \neq (j_1,k_1), (j_2,k_2)$, $A'_{j_1k_1}=A_{j_2k_2}$ and $A'_{j_2k_2}=A_{j_1k_1}$, then $e_{A'}<e_{A}$.

\item  If $u_{ij} \nprec u_{ij'}$ and $u_{ij'} \nprec u_{ij}$, and, $T_{ij}T_{ij'}$ divides $m$; then $c_m=c_{m'}$, where $m'=\frac{T_{il}T_{il'}}{T_{ij}T_{ij'}}m$, and $(u_{il},u_{il'})=\text{sort}(u_{ij},u_{ij'})$. Additionally $e_{m',i}<e_{m,i}$ and $e_{m',i'}=e_{m,i'}$, for all $i' \neq i$, which implies $e_{m'} < e_{m}$, and then logically $(c_{m'},e_{m'}) <_{lex} (c_m,e_m)$.

\item A monomial $m \in S'$ is completely reduced if and only if $u_{ij} \prec u_{i'j'}$ or $u_{i'j'} \prec u_{ij}$, whenever $T_{ij}T_{i'j'}$ divides $m$.

\end{enumerate}
\end{Remark}

\begin{proof}

\noindent (i)  Since $u_{ij} \nprec u_{i'j'}$, it must happen that $Y_{ij1}=\text{max}(u_{ij}) > \text{min}(u_{i'j'})=Y_{i'j'd_{i'}}$.

Let $p=\text{max}\{ k: 1 \leq k \leq d_i \text{ and } Y_{ijk}>Y_{i'j'(d_{i'}-k+1)}\}$. Consider:
\[u= \left(  \prod_{k=p+1}^{d_i} Y_{ijk} \right)\left(  \prod_{k=1}^{p} Y_{i'j'(d_{i'}-k+1)} \right) \text{ and } v=\left(  \prod_{k=1}^{d_{i'}-p} Y_{i'j'k} \right)\left(  \prod_{k=1}^{p} Y_{ijk} \right) \]

Realize that $\text{max}(u)=\text{max}(Y_{ij(p+1)},Y_{i'j'(d_{i'}-p+1)})$, while $\text{min}(v)=\text{min}(Y_{ijp},Y_{i'j'(d_{i'}-p)})$. The definition of $p$ implies that $Y_{i'j'(d_{i'}-p)} \geq Y_{ij(p+1)}$ and $ Y_{ijp}>Y_{i'j'(d_{i'}-p+1)}$. The former statement coupled with the fact that $Y_{ijp} \geq  Y_{ij(p+1)}$ and $Y_{i'j'(d_{i'}-p)} \geq  Y_{i'j'(d_{i'}-p+1)}$, proves that $\text{min}(v) \geq \text{max}(u)$. Hence $u = u_{il}$ and $v=u_{i'l'}$.

Since $c_m$ and $c_{m'}$ are independent of whether their $i^{\text{th}}$-levels are represented by inversion minimal matrices or not, we will consider representations $(A_r, \dots, A_1, A_0)$ and $(A'_r,\dots,A'_1,A'_0)$, for $m$ and $m'$ respectively, such that  $A_{\hat{i}}=A'_{\hat{i}}$ for $\hat{i} \neq i, i'$, $(A_{i})_{\hat{j}\hat{k}}=(A'_{i})_{\hat{j}\hat{k}} \text{ and } (A_{i'})_{\hat{j}\hat{k}}=(A'_{i'})_{\hat{j}\hat{k}}$ whenever $\hat{j}>1$, and, $(A_i)_{1k}=Y_{ijk}, (A_{i'})_{1k}=Y_{i'j'k}, (A'_{i})_{1k}=Y_{ilk} \text{ and } (A'_{i'})_{1k}=Y_{i'l'k}$. 

Notice that $(A_{i'})_{1k} = Y_{i'j'k }\leq Y_{i'l'k} = (A'_{i'})_{1k}$ for all $1 \leq k \leq d_{i'}$, since $(Y_{i'l'1}, Y_{i'l'2},\dots, Y_{i'l'd_{i'}})$ is a rearrangement of $(Y_{i'j'1}, \dots, Y_{i'j'(d_{i'}-p)},Y_{ij1}, \dots, Y_{ijp})$,  with entries written in decreasing order, and $Y_{ijk} \geq Y_{ijp} > Y_{i'j'(d_{i'}-p+1)} \geq Y_{i'j'(d_{i'}-p+k)}$ for $1 \leq k \leq p$.

Furthermore, for $p+1 \leq k \leq d_{i}$, there exists a $\pi(k)$ such that $(A_{i})_{1k}=(A'_{i})_{1\pi(k)}$; for $1 \leq k \leq d_{i'}-p$, there exists a $\pi'(k)$ such that $(A_{i'})_{1k} =(A'_{i'})_{1\pi'(k)}$; and, for $1 \leq k \leq p$, there exist $\gamma(k)$ and $\gamma'(k)$ such that $(A_{i})_{1k}=(A'_{i'})_{1\gamma'(k)}$ and $(A_{i'})_{1(d_{i'}-k+1)}=(A'_{i})_{1\gamma(k)}$, and $\{ \pi(k) \} \cap \{ \gamma(k) \} =\{ \pi'(k) \} \cap \{ \gamma'(k) \} = \o$ . 

We will proceed to compare the contributions that the entries in $(A_s, \dots, A_1, A_0)$ make to the comparability number of $m$ against the contributions that the entries in the arrangement $(A'_s,\dots,A'_1,A'_0)$ make to the comparability number of $m'$. 

\noindent (a) 
$c_{m',A'_{\hat{i}},\hat{j}\hat{k}} = c_{m,A_{\hat{i}},\hat{j}\hat{k}}$ whenever $\hat{i}< i \text{ or } \hat{i} >i'$. Since $(A_{\hat{i}})_{\hat{j}\hat{k}}=(A'_{\hat{i}})_{\hat{j}\hat{k}}$ , and the entries in the arrangement $(A'_s, \dots, A'_{\hat{i}+2}, A'_{\hat{i}+1})$ are the same entries as in the arrangement $(A_s, \dots,  A_{\hat{i}+2}, A_{\hat{i}+1})$ after a permutation.

\noindent (b)
$c_{m',A'_{\hat{i}},\hat{j}\hat{k}} \leq  c_{m,A_{\hat{i}},\hat{j}\hat{k}}$ whenever $i< \hat{i} < i'$, or, $\hat{i} = i,i'$ and $\hat{j}>1$. Since $A_{\hat{i}}=A'_{\hat{i}}$, for $\hat{i} \neq i, i'$; $(A_{\hat{i}})_{\hat{j}\hat{k}}=(A'_{\hat{i}})_{\hat{j}\hat{k}}$, for $\hat{i}=i,i'$ and $\hat{j}>1$, and $(A_{i'})_{1k} \leq (A'_{i'})_{1k}$ for all $1 \leq k \leq d_{i'}$.

\noindent (c) $c_{m',A'_{i'},1\pi'(k)} = c_{m,A_{i'},1k} $, whenever $1 \leq k \leq d_{i'}-p$, by the same argument as in (a).

\noindent (d) $c_{m',A'_{i},1\pi(k)} \leq c_{m,A_{i},1k} $, whenever $p+1 \leq k \leq d_{i}$, by the same argument as in (b).

\noindent (e) $ c_{m',A'_i,1\gamma'(k)}+c_{m',A'_{i'},1\gamma(k)}<c_{m,A_i,1k}+c_{m,A_{i'},1(d_{i'}-k+1)}$, whenever $1 \leq k \leq p$. We will break down the explanation:

\begin{itemize}[-]
\item  If $(A'_{\hat{i}})_{\hat{j}\hat{k}} < (A'_{i'})_{1\gamma'(k)}$ with $ \hat{i} > i'$, then $(A_{\hat{i}})_{\hat{j}\hat{k}} < (A_{i})_{1k}$, since $A'_{\hat{i}}=A_{\hat{i}}$ (whenever $\hat{i} \neq i',i$), $(A'_{i'})_{1\gamma'(k)}=(A_{i})_{1k}$ and if $\hat{i}>i'$ then $\hat{i}>i$. Analogously, if $(A'_{\hat{i}})_{\hat{j}\hat{k}} < (A'_{i})_{1\gamma(k)}$ with $ \hat{i} > i'$, then $(A_{\hat{i}})_{\hat{j}\hat{k}} < (A_{i'})_{1(d_{i'}-k+1)}$.

\noindent This proves that if $(A'_{\hat{i}})_{\hat{j}\hat{k}}$ contributes to the growth of $c_{m',A'_i,1\gamma'(k)}+c_{m',A'_{i'},1\gamma(k)}$, when $\hat{i}>i'$, then $(A_{\hat{i}})_{\hat{j}\hat{k}}$ contributes to the growth of $c_{m,A_i,1k}+c_{m,A_{i'},1(d_{i'}-k+1)}$.

\item If $(A'_{\hat{i}})_{\hat{j}\hat{k}} < (A'_{i})_{1\gamma(k)} $ with $i' > \hat{i} > i$, or $\hat{i}=i'$ and $\hat{j}>1$, then $(A_{\hat{i}})_{\hat{j}\hat{k}} <(A_{i})_{1k}$. Since $(A'_{\hat{i}})_{\hat{j}\hat{k}}=(A_{\hat{i}})_{\hat{j}\hat{k}}$ (whenever $\hat{i} \neq i',i$, or, $\hat{i}=i'$ and $\hat{j}>1$), and $ (A'_{i})_{1\gamma(k)} = (A_{i'})_{1(d_{i'}-k+1)} = Y_{i'j'(d_{i'}-k+1)} < Y_{ijk}= (A_{i})_{1k}$.

\noindent This proves that if $(A'_{\hat{i}})_{\hat{j}\hat{k}}$ contributes to the growth of $c_{m',A'_i,1\gamma'(k)}+c_{m',A'_{i'},1\gamma(k)}$, when $i' > \hat{i} > i$, or $\hat{i}=i'$ and $\hat{j}>1$, then $(A_{\hat{i}})_{\hat{j}\hat{k}}$ contributes to the growth of $c_{m,A_i,1k}+c_{m,A_{i'},1(d_{i'}-k+1)}$.

\item Finally, we have that $(A'_{i'})_{1\hat{k}} = Y_{i'l'\hat{k}} \geq \text{min}(u_{i'l'}) \geq \text{max}(u_{il}) \geq Y_{ilk'}=(A'_{i})_{1k'}$,  for all $1 \leq k' \leq d_{i}$ and all $1 \leq \hat{k} \leq d_{i'} $; which implies that $(A'_{\hat{i}})_{1\hat{k}} $ makes no contribution to the growth $c_{m',A'_i,1\tau'(k)}+c_{m',A'_{i'},1\tau(k)}$, with $\hat{i}=i'$. 

\noindent On the other hand, for $d_{i'}-p+1 \leq \hat{k} \leq d_{i'} $, we have $(A_{i'})_{1\hat{k}} =Y_{i'j'\hat{k}} < Y_{ijk} = (A_{i})_{1k}$, for all $1 \leq k \leq d_i$, which means that $(A_{\hat{i}})_{1\hat{k}}$ contributes to the growth of $c_{m,A_i,1k}+c_{m,A_{i'},1(d_{i'}-k+1)}$, when $\hat{i}=i'$ and $d_{i'}-p+1 \leq \hat{k} \leq d_{i'} $. 

\end{itemize}

\noindent Putting all the contributions together $c_{m',A'_i,1\gamma'(k)}+c_{m',A'_{i'},1\gamma(k)}<c_{m,A_i,1k}+c_{m,A_{i'},1(d_{i'}-k+1)}$ as we wanted .

Adding all the inequalities and equalities from (a), (b), (c), (d) and (e) together we obtain that $c_{m'}<c_{m}$. 

\

\noindent (ii) We will proceed to compare the contributions that the entries of $A$ make to the number of inversions  of $A$ against the contributions that the entries of $A'$ make to the number of inversions of $A'$. 
 
\noindent (a) $e_{A',\hat{j}\hat{k}}=e_{A,\hat{j}\hat{k}}$ whenever $(\hat{k},\hat{j})<_{lex} (k_1,j_1)$ or $(k_2,j_2)<_{lex} (\hat{k},\hat{j})$. Since after a permutation the entries $A_{jk}$ with $(\hat{k},\hat{j})<_{lex} (k,j)$ become the entries $A'_{jk}$ with $(\hat{k},\hat{j})<_{lex} (k,j)$.

\noindent (b) $e_{A',\hat{j}\hat{k}} \leq e_{A,\hat{j}\hat{k}}$ for $(k_1,j_1) <_{lex} (\hat{k},\hat{j}) <_{lex} (k_2,j_2)$. Since $A_{jk} \geq A'_{jk}$ for $(k,j) >_{lex} (k_1,j_1)$.

\noindent (c) $e_{A',j_1k_1}+e_{A',j_2k_2} < e_{A,j_1k_1}+e_{A,j_2k_2}$. We will break down the explanation:
\begin{itemize}[-]
\item If $A'_{\hat{j}\hat{k}} > A'_{j_2k_2}$, with $(k_2,j_2) <_{lex} (\hat{k},\hat{j})$, then $A_{\hat{j}\hat{k}}=A'_{\hat{j}\hat{k}} > A'_{j_2k_2}= A_{j_1k_1}$, and $(k_1,j_1) <_{lex} (\hat{k},\hat{j})$. Also if $A'_{\hat{j}\hat{k}} > A'_{j_1k_1}$, with $(k_2,j_2) <_{lex} (\hat{k},\hat{j})$, then $A_{\hat{j}\hat{k}}=A'_{\hat{j}\hat{k}}> A'_{j_1k_1} =A_{j_2k_2}$.

\noindent This proves that if $A'_{\hat{j}\hat{k}}$ contributes to the growth of $e_{A',j_1k_1}+e_{A',j_2k_2}$  then $A_{\hat{j}\hat{k}}$ contributes to the growth of $e_{A,j_1k_1}+e_{A,j_2k_2}$ when $(\hat{k},\hat{j}) >_{lex} (k_2,j_2)$.

\item If $A'_{\hat{j}\hat{k}} > A'_{j_1k_1}$, with $(k_1,j_1)<_{lex} (\hat{k},\hat{j}) <_{lex} (k_2,j_2)$ , then $A_{\hat{j}\hat{k}} = A'_{\hat{j}\hat{k}} > A'_{j_1k_1}=A_{j_2k_2} > A_{j_1k_1}$. 

\noindent This proves that if $A'_{\hat{j}\hat{k}}$ contributes to the growth of $e_{A',j_1k_1}+e_{A',j_2k_2}$  then $A_{\hat{j}\hat{k}}$ contributes to the growth of $e_{A,j_1k_1}+e_{A,j_2k_2}$, when $(k_1,j_1)<_{lex} (\hat{k},\hat{j}) <_{lex} (k_2,j_2)$.

\item Finally, notice that $A'_{j_2k_2} \ngtr A'_{j_1k_1}$, while $A_{j_2k_2} > A_{j_1k_1}$. Hence $A'_{j_2k'_2}$ does not contribute to the growth of $e_{A',j_1k_1}+e_{A',j_2k_2}$ , but $A_{j_2k_2}$ contributes to the growth of $e_{A,j_1k_1}+e_{A,j_2k_2}$.
\end{itemize}
Putting all the contributions together we obtain $e_{A',j_1k_1}+e_{A',j_2k_2} < e_{A,j_1k_1}+e_{A,j_2k_2}$ as we wanted. 

Adding all the inequalities and equalities described in (a), (b) and (c) together we obtain $e_{A'} < e_{A}$. 

\

\noindent (iii)
 Let $(B_s^{(m)}, \dots, B_1^{(m)}, B_0^{(m)})$ be a representation of $m$ in terms of inversion minimal matrices, and $(B_s^{(m')}, \dots, B_1^{(m')}, B_0^{(m')})$ be a representation of $m'$ in terms of inversion minimal matrices, then $B_{i'}^{(m)}=B_{i'}^{(m')}$ for $i' \neq i$, and the entries of $B_{i}^{(m')}$ are the same as the entries of $B_{i}^{(m)}$ after a permutation. This proves that $c_m=c_m'$ and that $e_{m',i'}=e_{m,i'}$ for $i' \neq i$.

To prove that $e_{m',i} < e_{m.i}$, (and thus obtain that $e_{m'} < e_m$): Let $h$ and $h'$ be such that $(B_{i}^{(m)})_{hk}=Y_{ijk}$ and $(B_{i}^{(m)})_{h'k}=Y_{ij'k}$. We will assume without loss of generality that $h < h'$. 

Let $A'_{i}$ be the matrix such that $(A'_{i})_{\hat{j}k}=(B^{(m)}_{i})_{\hat{j}k}$ whenever $\hat{j} \neq h, h'$, and $(A'_{i})_{hk}=Y_{ilk}$ , $(A'_{i})_{h'k}=Y_{il'k} $, then it is clear that $e_{B^{(m')}_{i}} \leq e_{A'_i}$ by the minimality of $B_{i}^{(m')}$. So we only need to prove that $e_{A'_i} < e_{B_{i}^{(m)}}$. 

Since $u_{ij} \nprec u_{ij'}$ and $u_{ij'} \nprec u_{ij}$, then there are $(h_1,k_1)= \text{min}\{ (\tilde{h},\tilde{k}) : \tilde{h}=h \text{ or } \tilde{h}=h' \text { and } \exists (\hat{h}, \hat{k})$, with $ ( \tilde{k}, \tilde{h} ) <_{lex} (\hat{k}, \hat{h} ), \hat{h}=h \text{ or } \hat{h}=h', \text{ and }(B_{i}^{(m)})_{ \tilde{h} \tilde{k} } < (B_{i}^{(m)})_{\hat{h}\hat{k}}  \}$, and $(h'_1,k'_1)= \text{min} \{(\tilde{h},\tilde{k}) :  \tilde{h}=h \text{ or } \tilde{h}=h', (k_1, h_1 ) <_{lex} (\tilde{k}, \tilde{h}) \text{ and } (B_{i}^{(m)})_{h_1k_1} < (B_{i}^{(m)})_{\hat{h}\hat{k}} \}$. 

Replace $B_{i}^{(m)}$ by $B^{(1)}_i$, a matrix with the property that $(B^{(1)}_i)_{h_1k_1}=(B_i)_{h_2k_2}$, $(B^{(1)}_i)_{h_2k_2}=(B_i)_{h_1k_1}$ and $(B^{(1)}_i)_{\hat{j}k}=(B^{(m)}_i)_{\hat{j}k}$, whenever $(\hat{j},k) \neq (h_1,k_1), (h_2,k_2)$.

If $B^{(1)}_i \neq A'_i$, we can apply the exchange described above again, and after a finite number of exchanges we must obtain that $B^{(\eta)}_{i}=A'_i$, and then by Remark \ref{1.7.}(ii), we have that $e_{A'_i} = e_{B^{(\eta)}_i} < e_{B^{(\eta -1)}_i} < \dots < e_{B^{(1)}_i} < e_{B^{(m)}_i} $, as we wanted. 

\

\noindent (iv) ($ \Rightarrow $)  We will proceed by contradiction. Assume $m$ is completely reduced and that there  are $T_{ij}$ and $T_{i'j'} $ that divide $m$ such that $u_{ij} \nprec u_{i'j'}$ and $u_{i'j'} \nprec u_{ij}$. Then, by either Remark \ref{1.7.}(i) or Remark \ref{1.7.}(iii), there is $m'$ such that $(c_{m'},e_{m'}) <_{lex} (c_m,e_m)$, which implies that $(c_m,e_m) \neq (0,0)$ contradicting the fact that $m$ is completely reduced. 

\noindent ( $\Leftarrow $) If  $u_{ij} \prec u_{i'j'}$ or $u_{i'j'} \prec u_{ij}$ whenever $T_{ij}T_{i'j'}$ divides $m$, then we can represent $m$ as:
\[ m=\prod_{i=0}^{s}  \left(  \prod_{j=1}^{r_i} T_{i\sigma_{i}^{m}(j)} \right) \text{ with } (\star) \text{ } u_{i\sigma_{i}^{m}(j)} \prec u_{i'\sigma_{i'}^{m}(j')} \text{ whenever } (i,j) <_{lex} (i',j') \]

Then the arrangement $(A_r, \dots, A_1, A_0)$, with $(A_i)_{jk}=Y_{i\sigma^{m}_{i}(j)k}$ is a representation of $m$ by matrices and because of $(\star)$, we get $c_m=0$ and $e_{A_i}=0$, for all $i$. This means that $A_i$ is inversion minimal, implying $(c_m,e_m)=(0,0)$, or equivalently, that $m$ is completely reduced.
\end{proof}

We will use the fact that the set $\{ \Psi(m): m \in \text{Mon}(S'), m \text{ completely reduced} \}$ is linearly independent over $K$. This is a direct consequence of the following lemma.

 \begin{Lemma} \label{1.8.} Let $m, m' \in \text{Mon}(S')$, with $m, m'$ completely reduced. If $\Psi(m) =\Psi (m')$, then $m=m'$.
 \end{Lemma}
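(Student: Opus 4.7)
My plan is to induct on the total number of factors $r=r_0+r_1+\cdots+r_s$ of $m$. Matching the $t_i$-exponents of $\Psi(m)=\Psi(m')$ for each $i\geq 1$ gives $r_i^{(m)}=r_i^{(m')}$, and matching total $X$-degrees then forces $r_0^{(m)}=r_0^{(m')}$, so both monomials share the induction parameter. The base case $r=0$ is trivial, and the case where $r_1=\cdots=r_s=0$ reduces to the fact that $\Psi$ restricts to the natural isomorphism $K[T_{01},\ldots,T_{0n}]\cong K[X_1,\ldots,X_n]$ sending $T_{0j}$ to $X_j$, from which $m$ is read off directly.

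For the inductive step, set $i^*=\max\{i:r_i>0\}\geq 1$. The key observation driving the argument is that complete reducedness forces every variable appearing in levels $0,1,\ldots,i^*-1$ of $m$ to be less than or equal to every variable appearing in level $i^*$ of $m$ in the order $X_1>X_2>\cdots>X_n$. Indeed, for any $u_{ij}$ with $i<i^*$ (or $u_{0k}=X_k$) appearing in $m$ and any $u_{i^*j'}$ appearing in the level-$i^*$ part, the required comparability $u_{ij}\prec u_{i^*j'}$ reads $(u_{ij},u_{i^*j'})=\textbf{ord}(u_{ij},u_{i^*j'})$, which by the definition of ordering gives $\max(u_{ij})\leq\min(u_{i^*j'})$; taking extrema over all such pairs proves the claim, and the same holds for $m'$.

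Let $\mu=\Psi(m)/(t_1^{r_1}\cdots t_s^{r_s})$ be the common $X$-component, and let $\mu_{i^*}^{(m)}$ denote the product of the $r_{i^*}$ level-$i^*$ generators of $m$. The observation above implies that $\mu_{i^*}^{(m)}$ is the multiset consisting of the $r_{i^*}d_{i^*}$ largest entries of $\mu$ counted with multiplicity, a multiset depending only on $\Psi(m)$; hence $\mu_{i^*}^{(m)}=\mu_{i^*}^{(m')}$. Applying Sturmfels' sorting uniqueness theorem \cite{S}, the sorted factorization of $\mu_{i^*}$ into $r_{i^*}$ monomials of degree $d_{i^*}$ is unique, and since $\mathcal{M}$ is closed under comparability this unique sorted sequence coincides with the level-$i^*$ sequence of both $m$ and $m'$. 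Cancelling the common level-$i^*$ factor produces completely reduced monomials $m^\downarrow,m'^\downarrow$ of strictly smaller total degree with $\Psi(m^\downarrow)=\Psi(m'^\downarrow)$, and the induction hypothesis concludes $m^\downarrow=m'^\downarrow$, so $m=m'$.

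The point requiring most care is the ``top $r_{i^*}d_{i^*}$'' description of $\mu_{i^*}^{(m)}$ when a single variable $X_l$ occurs simultaneously as the minimum of some level-$i^*$ factor and the maximum of a lower-level factor. A direct count shows that all occurrences of variables of index strictly smaller than $l$ belong to level $i^*$, all of index strictly greater than $l$ belong to lower levels, and the $X_l$-occurrences at level $i^*$ are exactly $r_{i^*}d_{i^*}$ minus the former count, so the top-multiset description is unambiguous.
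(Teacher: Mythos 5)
Your proof is correct, and it takes a genuinely different route from the paper's. The paper fixes inversion-minimal matrix representations for $m$ and $m'$, establishes that the matrix sizes coincide (your ``$r_i$'' step), and then argues by contradiction: it locates the position $(i',j',k')$ that is largest in the level-then-column-major scan at which the two representations disagree, and uses complete reducedness (entries weakly decrease along that scan) to conclude that some variable $X_{\tilde{i}}$ occurs with strictly different multiplicity in $\Psi(m)$ and $\Psi(m')$. You instead peel off one level at a time: induct on the number of $T$-factors, observe that the inter-level comparability $\max(u_{ij})\leq\min(u_{i^*j'})$ forces the level-$i^*$ $X$-multiset to be the ``top block'' of $\Psi(m)$, recover the actual level-$i^*$ factorization from that multiset via the uniqueness of sorted sequences in \cite{S}, cancel, and recurse. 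The paper's approach is more self-contained (no appeal to the external sorting-uniqueness theorem) and somewhat shorter; yours makes the level structure explicit and cleanly separates the question of which variables belong to level $i^*$ from how they split into generators. Your boundary analysis at a shared variable $X_l$ is exactly the tie-breaking that the paper's lex-scan handles implicitly, and you have treated it correctly. One minor point: the phrase ``since $\mathcal{M}$ is closed under comparability this unique sorted sequence coincides\dots'' is superfluous — complete reducedness of $m$ already makes the level-$i^*$ sequence of $m$ a sorted sequence, and its membership in $\mathcal{M}_{i^*}$ is automatic because it came from $m$; note also that the lemma's hypotheses do not include closedness of $\mathcal{M}$ under comparability.
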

 \begin{proof}
 
Let $(B_s^{(m)}, \dots, B_1^{(m)}, B_0^{(m)})$ and $(B_s^{(m')}, \dots, B_1^{(m')}, B_0^{(m')})$ be representations of $m$ and $m'$ by $(s+1)$-tuples of inversion minimal matrices, respectively, and let $\Psi(m)=\Psi(m')=X_1^{\alpha_1}\cdot X_2^{\alpha_2} \cdot \dots \cdot X_n^{\alpha_n} \cdot t_1^{\beta_{1}} \cdot t_2^{\beta_2} \cdot \dots \cdot t_s^{\beta_s}$. If we prove that $B_i^{(m)}=B_i^{(m')}$ for all $0 \leq i \leq s$ then we can conclude that $m=m'$.

Since $m=\prod_{i=0}^{s}  \left(  \prod_{j=1}^{r_i} T_{i\sigma_{i}^{(m)}(j)} \right)$, then $\Psi(m)=\prod_{i=0}^{s}  \left(  \prod_{j=1}^{r_i} \Psi(T_{i\sigma_{i}^{(m)}(j)}) \right)$ 

\[ \Psi(m)= \left( \prod_{j=1}^{r_0} Y_{0\sigma_0^{(m)}(j)} \right) \cdot \prod_{i=1}^{s}  \left(  \prod_{j=1}^{r_i} u_{i\sigma_i^{(m)}(j)}t_i \right)=\left( \prod_{j=1}^{r_0} Y_{0\sigma_0^{(m)}(j)} \right) \cdot \prod_{i=1}^{s}  \left(  \prod_{j=1}^{r_i} u_{i\sigma_{i}^{(m)}(j)} \right) \cdot \left( \prod_{i=1}^{s} t_i^{r_i} \right).\]

This implies that the size of $B_i^{(m)}$ is $\beta_i \times d_i$ for $1 \leq i \leq s$, and the size of $B_0^{(m)}$ is $\beta_0 \times 1$, where $\beta_0= \sum_{i=0}^{n} \alpha_{i} - \sum_{i=0}^{r} \beta_i \cdot d_i$. An analogous argument shows that the size of $B_i^{(m')}$ is $\beta_i \times d_i$ for $1 \leq i \leq s$, and the size of $B_0^{(m')}$ is $\beta_0 \times 1$, where $\beta_0= \sum_{i=0}^{n} \alpha_{i} - \sum_{i=0}^{r} \beta_i \cdot d_i$.

We will proceed by contradiction and assume $m \neq m'$. Then $(B_i^{(m)})_{jk} \neq (B_i^{(m')})_{jk}$, for some $(i,j,k)$. Let $i'= \text{max}\{i : \exists (j,k) \text{ such that } (B_i^{(m)})_{jk} \neq (B_i^{(m')})_{jk }\}$ and let $(j',k')$ be such that $(B^{(m)}_{i'})_{j'k'} \neq (B^{(m')}_{i'})_{j'k'}$ and $(B^{(m)}_{i'})_{jk} = (B^{(m')}_{i'})_{jk}$ for all $(k,j) <_{lex} (k',j')$. Without loss of generality we can assume that $(B_{i'}^{(m)})_{j'k'} < (B^{(m')}_{i'})_{j'k'}=X_{\tilde{i}}$. Since $m$ is completely reduced we know that $(B^{(m)}_i)_{jk} \leq (B^{(m)}_{i'})_{j'k'} < X_{\tilde{i}}$ for all $i<i'$ and all $(k,j) >_{lex} (k',j')$ when $i=i'$. This implies that the highest power of $X_{\tilde{i}}$ that divides $\Psi(m')$ is higher than the highest power of $X_{\tilde{i}}$ that divides $\Psi(m)$, hence $\Psi(m') \neq \Psi(m)$, which is a contradiction.
\end{proof}

We are now ready to prove Lemma \ref{1.4.}.

\begin{proof}[\textit{Proof of Lemma 1.4}]

To guarantee the existence of a monomial order, $<_{\tau}$, such that the underlined terms in $G$ are initial according to $<_{\tau}$, it is enough to prove that the reduction relation modulo $G$ is Noetherian, by Theorem \ref{1.6.}. 

Let $f \in S'$, we will define $ c_f= \sum_{m \in \text{supp}(f)} c_m$ and $ e_f=\sum_{m \in \text{supp}(f)} e_m$ and the level of reduction of $f$ as $(c_f,e_f)$. 

Let $f'$ be a one step reduction of $f$ modulo $G$, then $\text{supp}(f')=(\text{supp}(f)-\{m_0\})\cup \{m'_0\}$, for some $m_0 \in \text{supp}(f)$ such that $m'_0=\frac{T_{il}T_{i'l'}}{T_{ij}T_{i'j'}} m_0$, with $i \leq i' $, $u_{ij} \nprec u_{i'j'}$ and $u_{i'j'} \nprec u_{ij}$, $u_{ij}u_{i'j'}=u_{il}u_{i'l'}$ and $u_{il} \prec u_{i'l'}$. Then, by either Remark \ref{1.7.}(i) or Remark \ref{1.7.}(iii), we get $(c_{m'_0}, e_{m'_0}) <_{lex} (c_{m_0},e_{m_0})$.

Notice that:
\[ (c_{f'},e_{f'}) \leq_{lex}  (c_{m'_0},e_{m'_0})+\sum_{m \in (\text{supp}(f)-\{ m_0\})}(c_m, e_m) <_{lex} \sum_{m \in \text{supp}(f)}(c_m, e_m) = (c_f,e_f)\]

If the reduction relation modulo $G$ is not Noetherian, then there is an infinite set of polynomials $\{f, f^{(1)}, f^{(2)}, \dots \}$ such that $(c_f,e_f) >_{lex} (c_{f^{(1)}}, e_{f^{(1)}}) >_{lex} (c_{f^{(2)}},e_{f^{(2)}}) >_{lex} \dots $, but this contradicts one of the corollaries of Dickson's Lemma, which establishes that there are not infinitely decreasing sequences of terms in any monomial order, (see \cite[2.1.7]{HH}).

Hence $\rightarrow_{G}$ is indeed Noetherian and the existence of $<_{\tau}$ is guaranteed. Furthermore, by Remark \ref{1.7.}(iv), if $\hat{f}$ is a remainder of $f$ when dividing by $G$, then $m \in \text{supp}(\hat{f})$ if and only if $m$ is completely reduced. 

To prove that $G$ is  Gr\"obner basis with respect to $<_{\tau}$, index the elements of $G$ as $g_1,g_2, \dots, g_{\kappa}$, we will denote the $S'$-polynomial of $g_i,g_j$ by $g_{i,j}$. 

Let $\hat{g_{i,j}}$ be a remainder of $g_{i,j}$ when divided by $G$. It is known that $g_{i,j}=\left(\sum_{k=1}^{\kappa} f_kg_k \right)+ \hat{g_{i,j}}$. Applying $\Psi$ on both sides of the equation, along with the fact that $G \subseteq \text{Ker}\Psi$, we obtain $0=\Psi(\hat{g_{i,j}})$. And this implies $\hat{g_{i,j}}=0$; for if $\hat{g_{i,j}} \neq 0$, then $\hat{g_{i,j}}$ is a linear combination of completely reduced monomials, and by Lemma \ref{1.8.} their images are linearly independent over $K$ which contradicts the fact that $0=\Psi(\hat{g_{i,j}})$.

We have proved that all $S'$-polynomials of $G$ reduce to $0$ when divided by $G$, so Buchberger's criterion (see \cite{CLO}) asserts that $G$ is a Gr\"obner basis according to $<_{\tau}$.
\end{proof}

\section{Main Result and Examples}

We will establish again the setup for our main result. Let $1 = d_0 \leq d_1 \leq \dots \leq d_s$, $I_0$ the maximal ideal of $S$ and $I_i$ homogeneous monomial ideals with minimal generators of fixed degree $d_i$ for $1 \leq i \leq s$. Let the minimal monomial generators of $I_i$  be $u_{ij}$, for $0 \leq j \leq n_{i}$, with the additional condition $u_{i1} >_{rev} u_{i2} >_{rev} \dots >_{rev} u_{in_{i}}$. Let $M=I_1 \oplus I_2 \oplus \dots \oplus I_s$, $\mathcal{M}=\coprod_{i=0}^{s} \{u_{ij}: 1\leq j \leq n_i \}$ and $\mathcal{N}=\{u_{ij}t_i: 0 \leq i \leq s, 1 \leq j \leq n_i \}$ with the convention that $t_0=1$. Consider the surjective map $\Psi: K[T_{ij}: u_{ij} \in \mathcal{M}] \longrightarrow K[\mathcal{N}] \cong \mathcal{R}(M)$, given by $\Psi(T_{ij})=u_{ij}t_{i}$ for all $u_{ij} \in \mathcal{M}$.

 \begin{Lemma}\label{2.1.}
If $\mathcal{M}$ is closed under comparability $Ker \Psi$ has a squarefree quadratic Gr\"obner basis.\end{Lemma}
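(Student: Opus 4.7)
The plan is to combine the results from Section 1 into a short argument establishing $\mathrm{Ker}\,\Psi = (G)$, where $G$ is the set of quadratic binomials from Lemma \ref{1.4.}. The inclusion $(G) \subseteq \mathrm{Ker}\,\Psi$ was already observed right after the statement of Lemma \ref{1.4.}, so the real content is the reverse inclusion.

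To prove $\mathrm{Ker}\,\Psi \subseteq (G)$, I would take any $f \in \mathrm{Ker}\,\Psi$ and use Lemma \ref{1.4.}, which gives a monomial order $<_{\tau}$ with respect to which $G$ is a Gr\"obner basis of the ideal it generates. The division algorithm with respect to $<_{\tau}$ produces a representation $f = \sum_{k=1}^{\kappa} f_k g_k + \hat{f}$ with $\hat{f}$ a remainder modulo $G$. By Remark \ref{1.7.}(iv) and the description of completely reduced monomials given in the proof of Lemma \ref{1.4.}, every monomial in $\mathrm{supp}(\hat{f})$ is completely reduced. Moreover $\hat{f} \in \mathrm{Ker}\,\Psi$, because $f \in \mathrm{Ker}\,\Psi$ by hypothesis and $\sum f_k g_k \in (G) \subseteq \mathrm{Ker}\,\Psi$.

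Now I would apply Lemma \ref{1.8.}: since $\Psi$ is injective on completely reduced monomials, the images $\{\Psi(m): m \text{ completely reduced}\}$ are pairwise distinct monomials in $K[X_1,\dots,X_n,t_1,\dots,t_s]$, hence $K$-linearly independent. Writing $\hat{f} = \sum_m b_m m$ as a $K$-linear combination of completely reduced monomials, the equation $\Psi(\hat{f}) = 0$ then forces every coefficient $b_m$ to vanish, so $\hat{f} = 0$. This gives $f \in (G)$ and therefore $(G) = \mathrm{Ker}\,\Psi$.

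It remains to observe that $G$ has the advertised shape. Each generator $T_{ij}T_{i'j'} - T_{il}T_{i'l'}$ is manifestly quadratic, and its initial term $T_{ij}T_{i'j'}$ is squarefree because the condition $(i,j) <_{lex} (i',j')$ ensures the two variables are distinct. There is no serious obstacle here: all the technical work was already absorbed into Lemma \ref{1.4.} (the existence of $<_{\tau}$ and the Gr\"obner property) and Lemma \ref{1.8.} (the $K$-linear independence of images of completely reduced monomials). The only subtlety to double-check is that the division-algorithm remainder lies in $\mathrm{Ker}\,\Psi$, which is immediate from $G \subseteq \mathrm{Ker}\,\Psi$.
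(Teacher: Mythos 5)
Your proof is correct and takes essentially the same approach as the paper: produce the remainder of $f$ modulo $G$, observe that its support consists of completely reduced monomials, and invoke Lemma~\ref{1.8.} to conclude the remainder vanishes. The paper phrases the reverse inclusion as a contrapositive (if $f$ does not reduce to $0$ then $\Psi(f)\neq 0$), while you argue it directly; these are logically equivalent and the mathematical content is identical. Your squarefreeness argument is in fact a touch cleaner than the paper's --- you observe directly that $(i,j)<_{lex}(i',j')$ is strict, so $T_{ij}\neq T_{i'j'}$ --- whereas the paper instead notes that the hypothetical square case $(i,j)=(i',j')$ would force the binomial to be identically zero via the sorting; both land in the same place.
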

\begin{proof}

Consider $G$ as in Lemma \ref{1.4.}. We already proved that $( G ) \subseteq \textrm{Ker}\Psi$. 

To prove the other inclusion: Let $f \in S'$, and assume $f$ does not reduce to 0 with respect to $G$, (i.e $f \notin (G)$).  Let $f'$ be the remainder of $f$ when divided by $G$.

Hence $\Psi(f)=\Psi(f') \neq 0$, since $f'$ is a linear combination of completely reduced monomials and, by Lemma \ref{1.8.}, their images are linearly independent over $K$,.

We conclude that if $f$ does not reduce to $0$ with respect to $G$, (equivalently $f \notin (G)$), then $f \notin \textrm{Ker}\Psi$. Proving that $\textrm{Ker}\Psi=(G)$ has a Gr\"{o}bner basis of quadrics given by $G$. 

To prove that $\textrm{Ker}\Psi=(G)$ is squarefree, notice that the initial term of $\underline{T_{ij}T_{i'j'}}-T_{il}T_{i'l'}$ is a square only if  $(i,j)=(i',j')$. Since $(u_{il}, u_{i'l'})=\text{sort}(u_{ij}, u_{ij})=(u_{ij}, u_{ij})$, we conclude $\underline{T_{ij}T_{i'j'}}-T_{il}T_{i'l'}=0$. \end{proof}

\begin{Theorem} \label{2.2.} If $\mathcal{M}$ is closed under comparability then $\displaystyle  K[T_{ij}: u_{ij} \in \mathcal{M}]/\textrm{Ker}\Psi \cong K[\mathcal{N}]   \cong \mathcal{R}(M)$ is Koszul and a normal Cohen-Macaulay domain.\end{Theorem}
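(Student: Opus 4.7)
The plan is to apply Lemma 2.1 together with three classical results already invoked in the introduction: Fröberg's theorem, the deformation/upper-semicontinuity argument for Koszulness, and the Sturmfels--Hochster combination. Since each of the three claims (Koszul, normal, Cohen-Macaulay, domain) follows from a standard citation once Lemma 2.1 is in hand, the proof should be essentially a bookkeeping exercise rather than a new computation.

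First I would observe that $\mathcal{R}(M) \cong K[\mathcal{N}]$ is the affine semigroup ring associated to the monomial set $\mathcal{N} \subset K[X_1,\dots,X_n,t_1,\dots,t_s]$, so it is automatically a domain (it sits inside a polynomial ring) and $\mathrm{Ker}\,\Psi$ is a toric ideal. By Lemma 2.1, $\mathrm{Ker}\,\Psi$ admits a Gröbner basis $G$ (with respect to the order $<_\tau$ produced in Lemma \ref{1.4.}) whose initial terms are squarefree quadratic monomials; consequently $\mathrm{in}_{<_\tau}(\mathrm{Ker}\,\Psi)$ is a squarefree quadratic monomial ideal in $S' = K[T_{ij}:u_{ij}\in\mathcal{M}]$.

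For Koszulness, I would cite Fröberg's theorem \cite{F}: the quotient $S'/\mathrm{in}_{<_\tau}(\mathrm{Ker}\,\Psi)$ is Koszul because its defining ideal is generated by quadratic monomials. The standard deformation argument (outlined in the introduction and detailed e.g.\ in \cite{CDR}) then transfers Koszulness from $S'/\mathrm{in}_{<_\tau}(\mathrm{Ker}\,\Psi)$ to $S'/\mathrm{Ker}\,\Psi \cong \mathcal{R}(M)$, so the $G$-quadratic condition supplied by Lemma 2.1 gives Koszulness of the multi-Rees algebra.

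For the normal Cohen-Macaulay property, I would invoke the Sturmfels--Hochster package explicitly referenced in the introduction. Because $\mathrm{in}_{<_\tau}(\mathrm{Ker}\,\Psi)$ is squarefree and $\mathrm{Ker}\,\Psi$ is a toric ideal, Sturmfels's theorem \cite{S} yields that the semigroup $\mathbb{N}\mathcal{N}$ is normal, i.e.\ $K[\mathcal{N}]$ is a normal semigroup ring. Hochster's theorem \cite{Hoc} then gives Cohen-Macaulayness. Combining the three conclusions proves the statement. There is essentially no obstacle to this argument: everything nontrivial has been absorbed into Lemma 2.1, and the only thing to be careful about is making sure the citations are applied in the correct order (squarefree $\Rightarrow$ normal $\Rightarrow$ Cohen-Macaulay; quadratic $\Rightarrow$ Koszul via deformation).
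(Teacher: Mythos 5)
Your proposal is correct and follows the same route as the paper's proof: apply Lemma 2.1 to obtain a squarefree quadratic Gröbner basis, invoke the $G$-quadratic condition (Fröberg plus deformation) for Koszulness, and then use Sturmfels's squarefree-initial-ideal criterion together with Hochster's theorem for normality and Cohen-Macaulayness. Your version is slightly more explicit about the underlying mechanism (naming Fröberg's theorem and the deformation step, and noting the domain property comes from $K[\mathcal{N}]$ sitting in a polynomial ring), but the content and ordering of citations match the paper exactly.
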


\begin{proof}

The fact that $\textrm{Ker}\Psi=(G)$, implies that the defining ideal of $\mathcal{R}(M)$ has a quadratic Gr\"obner basis that is square free. Because $\text{Ker}\Psi$ satisfies the G-quadratic condition, we conclude that $ \mathcal{R}(M)$ is Koszul. 

Additionally, the fact that the Gr\"obner basis is squarefree implies that $K[\mathcal{N}]$ is a normal domain, due to a result by Sturmfels \cite{S}. Another result states that $K[\mathcal{N}]$ is a normal domain if and only if $\mathcal{N}$ is an affine normal semigroup. Finally, a result by Hochster \cite{Hoc} proves that  if $\mathcal{N}$ is an affine normal semigroup then $ \mathcal{R}(M) \cong K[\mathcal{N}]$ is Cohen-Macaulay. \end{proof}

We will proceed to present a thorough characterization of sets that are closed under comparability, and correspond to Rees and multi-Rees algebras, to obtain corresponding Koszul and Cohen-Macaulay normal domains via Theorem \ref{2.2.}. We will denote by $\mathcal{M}_i$ the set of minimal monomial generators for $I_i$. Our first step is to prove that if $\mathcal{M}$ is closed under comparability, then $\mathcal{M}_i$ has to be a principal strongly stable set. We will start by recalling a set of equivalences given in a Lemma due to De Negri. 

\begin{Lemma} [\textit{A description of principal strongly stable sets}]\label{2.3.} 
Let $w_1= X_1^{\alpha_1}X_2^{\alpha_2} \dots X_n^{\alpha_n}$ and $w_2=X_1^{\beta_1}X_2^{\beta_2} \dots X_n^{\beta_n}$ be two monomials of the same degree. Then $w_2 \in \mathcal{B}(w_1)$ if and only if $\sum_{i=k}^n \beta_i \leq \sum_{i=k}^n \alpha_i$ for $2 \leq k \leq n$.
\end{Lemma}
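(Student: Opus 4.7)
The plan is to encode both sides of the equivalence in terms of the tail discrepancies $T_k := \sum_{i=k}^n (\alpha_i - \beta_i)$; note $T_1 = 0$ because $\deg w_1 = \deg w_2$, so the condition to prove becomes $T_k \geq 0$ for $2 \leq k \leq n$.

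For the $(\Rightarrow)$ direction, I would first check how a single Borel step $w \mapsto (X_p/X_q)w$ with $p < q$ affects tail sums of exponents: it decreases $\sum_{i=k}^n (\cdot)$ by exactly $1$ when $p < k \leq q$ and leaves it unchanged otherwise (since for $k \leq p$ the creation of $X_p$ and the loss of $X_q$ cancel, while for $k > q$ neither is in the range). Applying this to each move in an expression of $w_2$ as $(X_{i_1}/X_{j_1})\cdots(X_{i_t}/X_{j_t})w_1$ shows that every tail sum of $w_2$ is at most the corresponding tail sum of $w_1$, which is $T_k \geq 0$.

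For $(\Leftarrow)$, I would induct on the scalar quantity $D := \sum_{k=2}^n T_k$. If $D = 0$ then every $T_k$ vanishes, which forces $\alpha = \beta$ and hence $w_1 = w_2 \in \mathcal{B}(w_1)$. Otherwise, let $j$ be the \emph{largest} index with $\alpha_j < \beta_j$; such $j$ exists, because if $\alpha_i \geq \beta_i$ for every $i$ then equality of total degrees forces $\alpha = \beta$, contradicting $D > 0$. By the maximality of $j$, one has $\alpha_i \geq \beta_i$ for all $i > j$; combining $T_j \geq 0$ with $\alpha_j - \beta_j \leq -1$ yields $\sum_{i=j+1}^n (\alpha_i - \beta_i) \geq 1$, so there is some $q > j$ with $\alpha_q > \beta_q$, hence $\alpha_q \geq 1$.

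Now I would apply the Borel move $w_1 \mapsto w_1' := (X_j/X_q)w_1$. Writing $\alpha'$ for its exponent vector, the new discrepancies are $T'_k = T_k$ for $k \leq j$ or $k > q$, while $T'_k = T_k - 1$ for $j < k \leq q$. In this middle range, $T_k = \sum_{i=k}^n (\alpha_i - \beta_i) \geq \alpha_q - \beta_q \geq 1$, since every other summand is nonnegative by the maximality of $j$. Thus the tail inequalities are preserved by the move, while $D$ strictly decreases by $q - j$, so induction delivers $w_2 \in \mathcal{B}(w_1') \subseteq \mathcal{B}(w_1)$. The delicate step is precisely this preservation of $T_k \geq 0$ after the move, and it is what forces choosing $j$ to be the \emph{largest} index with $\alpha_j < \beta_j$ rather than any other candidate.
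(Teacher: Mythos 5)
Your proof is correct. One small point worth making explicit: the paper does not actually prove Lemma~\ref{2.3.}; it cites \cite[Lemma~1.3]{D} (De Negri), so there is no ``paper's proof'' to compare against. Your argument is the standard elementary one and is self-contained, which is a genuine improvement over a bare citation.

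A few observations on the details. In the $(\Rightarrow)$ direction you correctly note that the computation is purely on exponent vectors, so it does not matter whether the intermediate products $\frac{X_{i_1}}{X_{j_1}}\cdots\frac{X_{i_r}}{X_{j_r}}w_1$ are themselves monomials; the definition of $\mathcal{B}(w_1)$ only requires the final $w_2$ to be one. In the $(\Leftarrow)$ direction, your choice of $j$ as the \emph{largest} index with $\alpha_j < \beta_j$ is indeed the crux: it guarantees $\alpha_i - \beta_i \ge 0$ for all $i>j$, which gives both the existence of a suitable $q>j$ with $\alpha_q\ge 1$ (so the Borel move $X_j/X_q$ produces a genuine monomial $w_1'$) and the slack $T_k\ge 1$ in the range $j<k\le q$ so that the hypotheses persist after the move. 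The only cosmetic gap is the edge case $j=1$: there $T_j\ge 0$ is not one of the hypothesized inequalities, but it holds anyway because $T_1=0$ from the equal-degree assumption; you implicitly use this, and it would be worth a sentence. Finally, your induction in fact proves the slightly stronger statement that $w_2$ is reachable from $w_1$ by a chain of single Borel moves passing only through monomials, which is sometimes useful and is not immediate from the set-theoretic definition of $\mathcal{B}(w_1)$.
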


The proof of this lemma can be found in \cite[Lemma 1.3]{D}.

Next, we will show that the minimal generating set of a principal strongly stable ideal is closed under sorting. For this we will first describe the sorting of two monomials in a different way. 

Let $u_1=X_1^{\alpha_1}X_2^{\alpha_2}\dots X_n^{\alpha_n}$ and $u_2=X_1^{\beta_1}X_2^{\beta_2}\dots X_n^{\beta_n}$ be two monomials of the same degree. Let $J=\{i: \alpha_i+\beta_i \text{ is odd }\}$, then the cardinality of $J$ is an even number, $2t$, and its elements can be indexed as $i_1 < i_2 < \dots < i_{2t}$. 

It is easy to see that $\text{sort}(u_1,u_2)=(X_1^{\alpha'_1}X_2^{\alpha'_2}\dots X_n^{\alpha'_n}, X_1^{\beta'_1}X_2^{\beta'_2}\dots X_n^{\beta'_n} )$, where $\alpha'_{i}= \beta'_{i}=\frac{\alpha_{i}+\beta_{i}}{2}$ if $i \notin J$ and  $\alpha'_{i_r}=\frac{\alpha_{i_r}+\beta_{i_r}+(-1)^{r+1}}{2}$ and $\beta'_{i_r}=\frac{\alpha_{i_r}+\beta_{i_r}+(-1)^r}{2}$. 

Furthermore, $\sum_{k=i}^n \alpha'_k = \sum_{k=i}^n \beta'_k=\sum_{k=i}^n \frac{\alpha_k+\beta_k}{2}$ if $i>i_{2t}, i\leq i_1, \text{ or }, i_{2r} < i \leq i_{2r+1}$. On the other hand, if $i_{2r-1} < i \leq i_{2r}$ then $\sum_{k=i}^n \alpha'_k =-\frac{1}{2}+ \sum_{k=i}^n \frac{\alpha_k+\beta_k}{2}$ and $ \sum_{k=i}^n \beta'_k=\frac{1}{2}+\sum_{k=i}^n \frac{\alpha_k+\beta_k}{2}$.

\begin{Lemma}\label{2.4.}
Let $u_1,u_2 \in \mathcal{B}(w)$, then $\overline{u_1},\overline{u_2} \in \mathcal{B}(w)$, with $(\overline{u_1},\overline{u_2})=\text{sort}(u_1,u_2)$.
\end{Lemma}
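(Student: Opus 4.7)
The plan is to apply the combinatorial criterion for principal strongly stability given in Lemma 2.3 directly to the explicit exponent formulas for $\overline{u_1},\overline{u_2}$ stated just before the lemma. Write $w=X_1^{\gamma_1}\cdots X_n^{\gamma_n}$, $u_1=X_1^{\alpha_1}\cdots X_n^{\alpha_n}$ and $u_2=X_1^{\beta_1}\cdots X_n^{\beta_n}$. By Lemma 2.3, the hypothesis $u_1,u_2\in\mathcal{B}(w)$ is equivalent to $\sum_{k=i}^n\alpha_k\leq\sum_{k=i}^n\gamma_k$ and $\sum_{k=i}^n\beta_k\leq\sum_{k=i}^n\gamma_k$ for all $2\leq i\leq n$, and the desired conclusion $\overline{u_1},\overline{u_2}\in\mathcal{B}(w)$ amounts to the analogous inequalities for the tuples $(\alpha'_k)$ and $(\beta'_k)$.

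Using the expressions for $\sum_{k=i}^n\alpha'_k$ and $\sum_{k=i}^n\beta'_k$ recalled just above the lemma, the verification splits into two regimes. When $i>i_{2t}$, $i\leq i_1$, or $i_{2r}<i\leq i_{2r+1}$, both sums equal $\sum_{k=i}^n\tfrac{\alpha_k+\beta_k}{2}$, which is $\leq\sum_{k=i}^n\gamma_k$ by averaging the two hypothesis inequalities; so in this regime both inequalities are immediate. The only case that requires work is $i_{2r-1}<i\leq i_{2r}$, where $\sum_{k=i}^n\beta'_k=\tfrac{1}{2}+\sum_{k=i}^n\tfrac{\alpha_k+\beta_k}{2}$, since the averaging argument alone falls short by $\tfrac12$.

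The main obstacle, and the heart of the argument, is handling this deficit. I would resolve it by a parity observation. For $i_{2r-1}<i\leq i_{2r}$, the indices of $J$ that are at least $i$ are exactly $i_{2r},i_{2r+1},\ldots,i_{2t}$, which is an odd number $2t-2r+1$. Therefore $\sum_{k=i}^n(\alpha_k+\beta_k)$ is odd, which forces $\sum_{k=i}^n\alpha_k$ and $\sum_{k=i}^n\beta_k$ to have opposite parities, so they differ by at least $1$. Combining this with the elementary identity
\[
\max\!\left(\sum_{k=i}^n\alpha_k,\sum_{k=i}^n\beta_k\right)=\sum_{k=i}^n\tfrac{\alpha_k+\beta_k}{2}+\tfrac{1}{2}\left|\sum_{k=i}^n\alpha_k-\sum_{k=i}^n\beta_k\right|,
\]
one gets
\[
\sum_{k=i}^n\beta'_k=\tfrac{1}{2}+\sum_{k=i}^n\tfrac{\alpha_k+\beta_k}{2}\leq\max\!\left(\sum_{k=i}^n\alpha_k,\sum_{k=i}^n\beta_k\right)\leq\sum_{k=i}^n\gamma_k,
\]
where the last inequality uses the hypothesis on $u_1$ and $u_2$. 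The bound for $\sum_{k=i}^n\alpha'_k$ in the same range is even easier, since it equals $-\tfrac{1}{2}+\sum_{k=i}^n\tfrac{\alpha_k+\beta_k}{2}$, which is already $\leq\sum_{k=i}^n\gamma_k$.

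Finally, applying Lemma 2.3 in the reverse direction to the verified inequalities yields $\overline{u_1},\overline{u_2}\in\mathcal{B}(w)$, completing the proof. No other subtleties should arise: the formulas for $(\alpha'_k,\beta'_k)$ are already worked out in the paragraphs preceding the lemma, and the only nontrivial input is the parity-difference-at-least-one trick used to compensate for the $+\tfrac12$ in the alternating range.
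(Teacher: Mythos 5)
Your proposal is correct and follows essentially the same route as the paper: translate both hypothesis and conclusion via Lemma~2.3, reduce to the explicit exponent formulas for the sorted pair, and resolve the only nontrivial case ($i_{2r-1}<i\leq i_{2r}$) by observing that $\sum_{k\geq i}(\alpha_k+\beta_k)$ is odd. The paper closes the $\tfrac12$-gap a bit more directly -- an odd number bounded above by the even number $2\sum_{k\geq i}\gamma_k$ is automatically at most $2\sum_{k\geq i}\gamma_k-1$ -- whereas you route the same parity fact through the $\max=\text{avg}+\tfrac12|\text{diff}|$ identity, but the key idea is identical.
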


\begin{proof}
Let $u_1=X_1^{\alpha_1}X_2^{\alpha_2}\dots X_n^{\alpha_n}$, $u_2=X_1^{\beta_1}X_2^{\beta_2}\dots X_n^{\beta_n}$ and $w=X_1^{\gamma_1}X_2^{\gamma_2}\dots X_n^{\gamma_n}$. Since $u_1,u_2 \in \mathcal{B}(w)$,  we get that $ \sum_{k=i}^n \alpha_k, \sum_{k=i}^n \beta_k \leq \sum_{k=i}^n \gamma_k$ for $i=2, \dots, n$, because of Lemma \ref{2.3.}. Thus $\sum_{k=i}^n \frac{ \alpha_k+\beta_k}{2} \leq \sum_{k=i}^n \gamma_k$.

This implies immediately that  $\sum_{k=i}^n \alpha'_k \leq \sum_{k=i}^n \gamma_k $ for all $i=2, \dots, n$, by the previous observation.
 
Realize now that $\sum_{k=i}^n  (\alpha_k+\beta_k)$ is an odd number when $i_{2s-1} < i \leq i_{2s}$, so the fact that $\sum_{k=i}^n (\alpha_k+\beta_k) \leq 2\sum_{k=i}^n \gamma_k$  implies that $1+\sum_{k=i}^n (\alpha_k+\beta_k) \leq 2\sum_{k=i}^n \gamma_k$, and this in turn shows that $\sum_{k=i}^n \beta'_k \leq  \sum_{k=i}^n \gamma_k $  for all $i=2, \dots, n$.

And we get that $\overline{u_1},\overline{u_2} \in \mathcal{B}(w)$ because of Lemma \ref{2.3.}.
\end{proof}

The following theorem gives a complete characterization of sets closed under comparability. 

\begin{Theorem}\label{2.5.}
Consider a sequence of natural numbers $1 \leq d_1 \leq d_2 \leq \dots \leq d_s$, and monomial homogeneous ideals $I_i$ generated in fixed degree $d_i$.

Let $\mathcal{M}_0=\{X_1,X_2, \dots, X_n\}$ and let $\mathcal{M}_i$ be the minimal monomial generating set of $I_i$. Index the elements of $\mathcal{M}_i$ as $u_{i1}>_{rev} u_{i2} >_{rev} \dots >_{rev} u_{in_i}$.

Then $\mathcal{M}=\coprod_{i=0}^{r} \mathcal{M}_i$ is closed under comparability, if and only if, (i) $\mathcal{M}_i=\mathcal{B}(u_{in_i})$ and (ii) $\text{max}(u_{in_i}) \leq \text{min}(u_{(i+1)n_{i+1}})$ for all $1 \leq i \leq s-1$.  
\end{Theorem}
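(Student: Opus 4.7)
The theorem is an equivalence, and I handle each direction separately.

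For $(\Leftarrow)$, assume (i) and (ii). Sort closure within $\mathcal{M}_i=\mathcal{B}(u_{in_i})$ is immediate from Lemma \ref{2.4.}. For cross-level ord closure, chain condition (ii) to obtain $\max(u_{in_i})\le\min(u_{i'n_{i'}})$ for all $i<i'$. Given $u_{ij}\in\mathcal{M}_i$ and $u_{i'j'}\in\mathcal{M}_{i'}$, the sorted multiset of subscripts of $u_{ij}u_{i'j'}$ splits as the $d_{i'}$ smallest (forming $\hat{u_{i'j'}}$) and the $d_i$ largest (forming $\hat{u_{ij}}$). A direct tail-sum computation together with Lemma \ref{2.3.} applied to $u_{ij}\in\mathcal{B}(u_{in_i})$ and $u_{i'j'}\in\mathcal{B}(u_{i'n_{i'}})$, combined with the chained inequality, verifies $\hat{u_{ij}}\in\mathcal{B}(u_{in_i})$ and $\hat{u_{i'j'}}\in\mathcal{B}(u_{i'n_{i'}})$, yielding the required closure.

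For $(\Rightarrow)$, condition (ii) is the easier piece. Arguing contrapositively, if $\max(u_{in_i})>\min(u_{(i+1)n_{i+1}})$ the ord operation on this pair strictly reorders the product's standard factorization. The multiset of subscripts of $\hat{u_{in_i}}$ (the bottom $d_i$ of the combined sorted list) differs from that of $u_{in_i}$, and a direct counting argument shows that at every position in the standard factorization the subscript appearing in $\hat{u_{in_i}}$ is at least the corresponding subscript of $u_{in_i}$. Hence at the largest differing position the inequality is strict, giving $\hat{u_{in_i}}<_{rev}u_{in_i}$. Closure forces $\hat{u_{in_i}}\in\mathcal{M}_i$, contradicting revlex-minimality of $u_{in_i}$.

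For condition (i), $\mathcal{M}_i=\mathcal{B}(u_{in_i})$. The containment $\mathcal{B}(u_{in_i})\subseteq\mathcal{M}_i$ uses that ord closure with $\mathcal{M}_0=\{X_1,\dots,X_n\}$ translates into the Borel move on the minimum slot: for each $v\in\mathcal{M}_i$ and each variable $X_k$ whose subscript is smaller than that of $\min(v)$, replacing $\min(v)$ by $X_k$ produces an element of $\mathcal{M}_i$. Combined with sort closure (Lemma \ref{2.4.}), this lets us build up $\mathcal{B}(u_{in_i})$ from $u_{in_i}$: although replace-min alone is insufficient, interleaving it with sort operations realizes Borel moves at non-minimum positions, and a recursion shows every element of $\mathcal{B}(u_{in_i})$ is generated.

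The main obstacle is the reverse containment $\mathcal{M}_i\subseteq\mathcal{B}(u_{in_i})$. Suppose for contradiction there is $w\in\mathcal{M}_i\setminus\mathcal{B}(u_{in_i})$; by Lemma \ref{2.3.} some $k_0\ge 2$ satisfies $\sum_{l\ge k_0}w_l>\sum_{l\ge k_0}(u_{in_i})_l$. The plan is to iterate ord closure on $w$ with variables of $\mathcal{M}_0$ to produce a Borel descendant $w'\in\mathcal{M}_i$ supported entirely on subscripts $<k_0$, then sort $w'$ repeatedly with $u_{in_i}$. By Lemma \ref{2.3.} each iteration increases the tail sum at $k_0$ of one of the sort outputs, while introducing low-subscript mass inherited from $w'$. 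After finitely many iterations the output matches $u_{in_i}$'s tail sum at $k_0$ but carries strictly larger mass at some subscript $<k_0$; by the same standard-factorization argument used in the contrapositive of (ii), this output is strictly revlex-smaller than $u_{in_i}$. Closure then forces it into $\mathcal{M}_i$, contradicting revlex-minimality of $u_{in_i}$.
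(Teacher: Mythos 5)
Most of your outline tracks the paper: the $(\Leftarrow)$ direction (Lemma~\ref{2.4.} for sort closure, a chained inequality plus a tail-sum computation via Lemma~\ref{2.3.} for ord closure), the contrapositive argument for condition (ii), and the replace-the-minimum idea for $\mathcal{B}(u_{in_i})\subseteq\mathcal{M}_i$ are all, in spirit, what the paper does (the last of these you leave quite vague --- ``interleaving it with sort operations realizes Borel moves at non-minimum positions'' needs the explicit $u^{*}$/$w^{*}$/$w'$ construction of the paper, but that is an issue of detail rather than of approach).

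The genuine gap is your argument for $\mathcal{M}_i\subseteq\mathcal{B}(u_{in_i})$, which diverges from the paper and, as written, does not work. First, pushing $w$ down to a Borel descendant $w'$ supported on subscripts $<k_0$ destroys the very surplus you started with: $w'$ has tail sum $0$ at $k_0$. Sorting $w'$ with $u_{in_i}$ then produces outputs whose tail sums at $k_0$ average to $\tfrac12\sum_{l\ge k_0}(u_{in_i})_l$, and an easy induction on the $\pm\tfrac12$ correction in the sorting formula shows that if the input tail sum is strictly below that of $u_{in_i}$, both output tail sums remain $\le\sum_{l\ge k_0}(u_{in_i})_l$. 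So your iteration can never overshoot; at best it matches. Second, even granting that some output $z$ has $\sum_{l\ge k_0}z_l=\sum_{l\ge k_0}(u_{in_i})_l$ together with $z_{l_0}>(u_{in_i})_{l_0}$ for some $l_0<k_0$, this does not force $z<_{rev}u_{in_i}$: equal tail sums past $k_0$ do not give coordinatewise equality there, and even if they did, there could be $l_1$ with $l_0<l_1<k_0$ and $z_{l_1}<(u_{in_i})_{l_1}$, making $z>_{rev}u_{in_i}$. The revlex comparison must be controlled at the \emph{largest} differing coordinate, and nothing in your construction pins that down.

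The paper avoids both problems by choosing the bad element cleverly rather than iterating: take $j'$ maximal with $u_{ij'}\notin\mathcal{B}(u_{in_i})$ (i.e.\ the revlex-smallest offender), and perform a single sort of $u_{in_i}$ with $u_{ij'}$. Writing $u_{in_i}=\prod X_k^{\alpha_k}$, $u_{ij'}=\prod X_k^{\beta_k}$, and $p=\max\{k:\alpha_k\neq\beta_k\}$, the sort output $\overline{v}$ satisfies $\overline{v}_k=\beta_k$ for $k>p$ and $\overline{v}_p>\beta_p$, so $\overline{v}<_{rev}u_{ij'}$, while $\sum_{l\ge q}\overline{v}_l>\sum_{l\ge q}\alpha_l$ at the witness index $q$ shows $\overline{v}\notin\mathcal{B}(u_{in_i})$. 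This contradicts the maximality of $j'$, not the minimality of $u_{in_i}$, and the revlex comparison is localized at $p$ by construction. If you want to salvage your write-up, replace the iteration with this one-shot sort against the extremal offender.
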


\begin{proof}

\

($\Rightarrow$) We will first prove that $\mathcal{M}_i \subseteq \mathcal{B}(u_{in_i})$ by contradiction. 

Let $N=\{ j : u_{ij} \notin \mathcal{B}(u_{in_i}) \}$, if $ \mathcal{M}_i \nsubseteq \mathcal{B}(u_{in_i})$ then $N \neq \emptyset $. Let $j'=\text{max}(N)$. 

Consider $u_{in_i}= X_1^{\alpha_1}X_2^{\alpha_2}\dots X_n^{\alpha_n}$ and $u_{ij'}=X_1^{\beta_1}X_2^{\beta_2}\dots X_n^{\beta_n}$, and let $p=\text{max}\{j: \alpha_j \neq \beta_j \}$, then $\alpha_p > \beta_p$ since $ u_{in_i} <_{rev} u_{ij'}$. Additionally, since $u_{ij'} \notin \mathcal{B}(u_{in_i})$, the set $N'= \{k: \sum_{i=k}^n \alpha_i < \sum_{i=k}^n \beta_i\} \neq \emptyset$, because of Lemma \ref{2.3.}. Let $q=\text{max}(N')$.

Let $(\overline{u}, \overline{v})=\text{sort}(u_{in_i},u_{ij'})$, then $\overline{v} \in \mathcal{M}_i$, since $\mathcal{M}$ is closed under comparability. So  $\overline{v}=u_{ij''}$ for some $j'' \leq n_i$. Let $\overline{v}=u_{ij''}=X_1^{\beta'_1}\dots X_n^{\beta'_n}$. Notice that either $\sum_{k=q}^n \beta'_k=\sum_{k=q}^n \frac{\alpha_k+\beta_k}{2}$ or $\sum_{k=q}^n \beta'_k=\frac{1}{2}+\sum_{k=q}^n \frac{\alpha_k+\beta_k}{2}$. Regardless of the case, $\sum_{k=q}^n \beta'_k > \sum_{k=q}^n \alpha_k$, which implies that $u_{ij''} \notin \mathcal{B}(u_{in_i})$ by Lemma \ref{2.3.}. 

On the other hand, $\beta'_i=\beta_i$ when $i>p$, and either $\beta'_i=\frac{\alpha_p+\beta_p+1}{2}$ or $\beta'_i=\frac{\alpha_p+\beta_p}{2}$, because of the maximality of $p$. Regardless, $\overline{v}=u_{ij''} <_{rev} u_{ij'}$, which implies $j''>j'$. Contradicting the maximality of $j'$.

\

Now we will prove that $\mathcal{B}(u_{in_i}) \subseteq  \mathcal{M}_i$. 

Let $u=\prod_{j=1}^{p}X_j^{\gamma_j} \in \mathcal{M}_i$ with $\gamma_{p} \neq 0$, we will define $u^{*}=\prod_{j=1}^{p} X_j^{\gamma^{*}_j} $ with $\gamma^{*}_{j} =\gamma_{j}$ for $j<p-1$, $\gamma^{*}_{p-1}=\gamma_{p-1}+1$ and $\gamma^{*}_{p}=\gamma_{p}-1$. Notice that $u <_{rev} u^{*}$. The fact that $\mathcal{M}$ is closed under comparability implies that $u^{*} \in \mathcal{M}_i$, since $\text{ord}(X_{p-1}, u)=(X_{p},u^{*})$. The repeated application of this fact proves that $u_{i1}=X_1^{d_i}$, and it also proves that if $u=X_1^{d_i-k}X_2^k \in \mathcal{M}_i$, then $\mathcal{B}(u) \subset \mathcal{M}_i$.

Let $k=\text{max}\{j \leq n_i : \mathcal{B}(u_{ij}) \subset \mathcal{M}_i \}$, notice that $k \geq 1$, since $u_{i1}=X_1^{d_i}$. We will prove that $k=n_i$ by contradiction.

If $k<n_i$, let $u_{i(k+1)}=\prod_{j=1}^{p}X_j^{\beta_j}$ with $\beta_{p} \neq 0$. Notice $p \geq 3$, since $ \mathcal{B}(u_{i(k+1)}) \nsubseteq \mathcal{M}_i$. Additionally, the set $N=\{v  \notin \mathcal{M}_i : v \in  \mathcal{B}(u_{i(k+1)}) \} \neq \emptyset$, so we can denote $w=\prod_{j=1}^{p} X_j^{\gamma_j}=\text{max}_{<_{rev}}N$. Remember that  $u_{i(k+1)} <_{rev} (u_{i(k+1})^{*} \in \mathcal{M}_i$, which implies that $(u_{i(k+1})^{*}=u_{il}$ for some $l < k+1$. 

Notice that $\gamma_p=\beta_p$. Otherwise $\gamma_p \leq \beta_p-1= \beta^{*}_{p}$, and then $\sum_{j=s}^n \gamma_j =0 = \sum_{j=s}^n \beta^{*}_j$ when $s>p$, $\sum_{j=p}^n \gamma_j = \gamma_p \leq \beta^{*}_{p} = \sum_{j=p}^n \beta'_j$ and $\sum_{j=s}^n \gamma_j \leq \sum_{j=s}^n \beta_j =\sum_{j=s}^n \beta^{*}_j $ when $s<p$, which by Lemma \ref{2.3.} implies that $w \in \mathcal{B}(u_{il}) \subset \mathcal{M}_i$, which is false.

A similar argument shows that $w^{*}=(\prod_{j=1}^{p-2}X_j^{\beta_{j}})X_{p-1}^{\beta_{p-1}+1}X_p^{\beta_p-1} \in \mathcal{B}(u_{il}) \subset \mathcal{M}_i$. Let $w'= (\prod_{j=1}^{p-3}X_j^{\beta_{j}})X_{p-2}^{\beta_{p-2}+1}X_{p-1}^{\beta_{p-1}-1}X_p^{\beta_p}$, then $w' \in \mathcal{B}(u_{i(k+1)})$ and $w<_{rev} w'$, which implies that $w' \in \mathcal{M}_i$, by the maximality of $w$. 

So $\text{sort}(w',w^{*})= ((\prod_{j=1}^{p-3}X_j^{\beta_{j}})X_{p-2}^{\beta_{p-2}+1}X_{p-1}^{\beta_{p-1}}X_p^{\beta_p-1} , w)$, and this implies that $w \in \mathcal{M}_i$, since $\mathcal{M}$ is closed under comparability. But this is a contradiction, and we obtain that $k=n_i$. 

\

Finally, if $\text{max}(u_{in_i}) > \text{min}(u_{(i+1)n_{i+1}}) $, let $b_i$ and $s_i$ be such that $\text{max}(u_{in_i})=X_{b_i}$ and $\text{min}(u_{in_i})=X_{s_i}$. Then $b_i < s_{i+1}$ and $u_{in_i}= \prod_{j=b_i}^{n} X_j^{\alpha_j}$, with $\alpha_{b_i} \neq 0$, while $u_{(i+1)n_{i+1}}=\prod_{j=1}^{s_{i+1}} X_j^{\beta_j}$, with $\beta_{s_{i+1}} \neq 0$. Let $\text{ord}(u_{in_i},u_{(i+1)n_{i+1}})=(\prod_{j=t}^{n} X_{j}^{\delta_j}, \prod_{j=1}^{t} X_{j}^{\delta'_j})$, then the fact that $\sum_{j=t}^{n} \delta_j = d_i = \sum_{j=b_i}^{n} \alpha_j$ implies that $t \leq s_{i+1}$, furthermore since $\alpha_{b_i} \neq 0 \neq \beta_{s_{i+1}}$ and $b_i < s_{i+1}$, then $\delta_{s_{i+1}} > \alpha_{s_{i+1}}$, which means that if $\prod_{j=t}^{n} X_{j}^{\delta_j} \in \mathcal{M}_i$ then $u_{in_i} >_{rev} \prod_{j=t}^{n} X_{j}^{\delta_j}$, which is a contradiction to the minimality of $u_{in_i}$.

Hence, if $\mathcal{M}$ is closed under comparability then $\text{max}(u_{in_i}) \leq \text{min}(u_{(i+1)n_{i+1}})$.

\

($\Leftarrow$) If $\mathcal{M}_i=\mathcal{B}(u_{in_i})$, then $\mathcal{M}$ is closed under sorting by Lemma \ref{2.4.}. 

So we only have to prove that $\mathcal{M}$ is closed under ordering. Let $l>k$, so $d_l \geq d_k$,  and consider $v \in \mathcal{M}_{l}=\mathcal{B}(u_{ln_l})$ with $u_{ln_l}=\prod_{j=b_l}^{s_l} X_j^{\alpha_j}$, and $\alpha_{b_l} \neq 0 \neq \alpha_{s_l}$ and $u \in \mathcal{M}_{k}=\mathcal{B}(u_{kn_k})$ with  $u_{kn_k}=\prod_{j=b_k}^{s_k} X_j^{\beta_j}$ and $\beta_{b_k} \neq 0 \neq \beta_{s_k}$. Since $\text{min}(u_{ln_l}) \geq \text{max}(u_{kn_k})$,  it must happen that $s_l \leq b_k$.

We can express $v$ and $u$ in the following way $v=\prod_{j=1}^{s_l} X_j^{\alpha'_j}$ and $u=\prod_{j=1}^{s_k} X_j^{\beta'_j}$. We want to prove that $\hat{u} \in \mathcal{M}_k=\mathcal{B}(u_{kn_k})$ and that $\hat{v} \in   \mathcal{M}_{l}=\mathcal{B}(u_{ln_l})$, with $(\hat{u}, \hat{v})=\text{ord}(u,v)=(\prod_{j=t}^{s_k} X_j^{\delta_j}, \prod_{j=1}^{t} X_j^{\delta'_j})$. Notice that $t \leq s_l$, since $\sum_{j=1}^{s_l} \alpha'_{j}=d_l =\sum_{j=1}^{t} \delta'_j$ and that $\delta'_{j}=\alpha'_{j}+\beta'_j$ if $j<t$, $\delta'_t+\delta_t=\alpha'_t+\beta'_t$, $\delta_j=\alpha'_j + \beta'_j$ if $ t< j \leq s_l$ and $\delta_j=\beta'_j$ if $s_l < j \leq s_k$.

Applying Lemma \ref{2.3.} after the following observations proves that $\hat{u} \in \mathcal{M}_k=\mathcal{B}(u_{kn_k})$:

- $\sum_{j=r}^{s_k} \delta_j \leq \sum_{j=t}^{s_k}  \delta_j=d_k=\sum_{j=b_k}^{s_k} \beta_j$ if $t \leq r \leq s_l$.  Remember $s_l \leq b_k$.

- $\sum_{j=r}^{s_k} \delta_j = \sum_{j=s}^{s_k} \beta'_j \leq \sum_{j=\text{max}(s,b_k)} \beta_j$ if $s_l < r \leq s_k$.

To prove that $\hat{v} \in \mathcal{M}_l = \mathcal{B}(u_{ln_l})$, we need the following fact $\sum_{j=1}^{s_k} \beta'_j=d_k=\sum_{j=t}^{s_k} \delta_j=\sum_{j=t}^{s_l} \alpha'_j +\sum_{j=t}^{s_k}\beta'_{j}-\delta'_t$, or equivalently $\delta'_t=\sum_{j=t}^{s_l} \alpha'_j - \sum_{j=1}^{t-1} \beta'_j$ and then apply Lemma \ref{2.3.} to the following observations:

- $\sum_{j=r}^{t} \delta'_j \leq \sum_{j=1}^{t}  \delta'_j=d_l=\sum_{j=b_l}^{s_l} \alpha_j$ if $r \leq b_l$.  Remember $t \leq s_l$.

- $\sum_{j=r}^{t} \delta'_j = \sum_{j=r}^{t-1} (\alpha'_j+\beta'_j) +\delta'_t = \sum_{j=r}^{s_l} \alpha'_j - \sum_{j=1}^{r-1} \beta'_j \leq \sum_{j=r}^{s_l} \alpha'_j \leq \sum_{j=s}^{s_l} \alpha_j$ if $b_l <r $.
\end{proof}

We can now present examples:

\begin{Example} [\textit{Rees Algebras of principal strongly stable Ideals}] \label{2.6.} 
Let $s=1$, consider $I=(\mathcal{B}(u))$, the principal strongly stable ideal generated by the monomial $u$. Then Theorems \ref{2.2.} and \ref{2.5.} guarantee that the Rees Algebra $\mathcal{R}(I)$ is a Koszul, Cohen-Macaulay normal domain.

\end{Example}

\begin{Example} [\textit{Multi-Rees Algebras of particular principal strongly stable ideals}]\label{2.7.}

Consider $I_i=(\mathcal{B}(u_{i}))$, with the constraints $\text{max}(u_i) \leq \text{min} (u_{i+1})$ and $\text{deg}(u_{i+1}) \geq \text{deg}(u_i) $. 

Then Theorems \ref{2.2.} and \ref{2.5.} guarantee that the multi-Rees algebra of the module $M =I_{1} \bigoplus I_{2} \bigoplus \dots \bigoplus I_{s}$ is a Koszul, Cohen-Macaulay normal domain. In particular, by letting $u_{i}=X_n^{d_i}$, we obtain that the multi-Rees algebra generated by the direct sum of powers of the maximal ideal of $S$ is a Koszul Cohen-Macaulay normal domain, a fact proved by Lin and Polini in \cite{LPo}.
\end{Example}

We conclude with the following observation that produces examples of Koszul Cohen-Macaulay normal domains for certain semigroup rings, and with the appropriate notation, for the special fiber ring of $\mathcal{R}(M)$, $\mathcal{F}(M)=K[\mathcal{N}']=\mathcal{R}(M)/(X_1,\dots,X_n)\mathcal{R}(M)$, with $\mathcal{N}'$ defined as in page 3. 

Let $1 \leq d_1 \leq d_2 \leq \dots \leq d_s <m$ and let $S^{*}=K[X_1, \dots, X_n, X_{n+1}, \dots, X_{n+s}]$. For each $1 \leq i \leq s$ let $\mathcal{M}_i^{*}\subseteq \text{Mon}(S^{*}_{d_i})$, with $\vert \mathcal{M}_i^{*} \vert=n_i$, and such that if $u \in \mathcal{M}_i^{*}$ then $X_j \nmid u$ for all $j>n$. Index the elements of $\mathcal{M}_i^{*}$ as $u_{i1} >_{rev} u_{i2} >_{rev} \dots >_{rev} u_{in_i}$ and let $\mathcal{N}^{*}=\{u_{ij}X_{n+i}^{m-d_i}: 1 \leq i \leq s, 1 \leq j \leq n_i \}$. If $\mathcal{M}^{*}=\coprod_{i=1}^{s} \mathcal{M}_i^{*}$ is closed under comparability and we consider the surjective map $\Psi^{*}:K[T_{ij}: u_{ij} \in \mathcal{M}^{*}] \rightarrow K[\mathcal{N}^{*}]$ given by $\Psi^{*}(T_{ij})=u_{ij}X_{n+i}^{m-d_i}$ then Lemma \ref{1.4.}, Remark \ref{1.7.}, Lemma \ref{1.8.} and Lemma \ref{2.1.} still hold and their proofs are similar to ones presented under the appropriate modifications. An equivalent to Theorem \ref{2.2.} can be stated for this case as 

$K[\mathcal{N}^{*}] \cong K[T_{ij}: u_{ij} \in \mathcal{M}^{*}] / \text{Ker}\Psi^{*}$ \textit{is a Koszul, normal Cohen-Macaulay domain}.
 
 Additionally we get the following modification for Theorem \ref{2.5.}:

(1) \textit{$\mathcal{M}^{*}$ is closed under comparability if $\mathcal{M}^{*}_i =\mathcal{B}(u_{in_i})$ and $\text{max}(u_{in_i}) \leq \text{min}(u_{(i+1)n_{i+1}})$}

(2) \textit{If $\mathcal{M}^{*}$ is closed under comparability then $\mathcal{M}^{*}_i \subseteq \mathcal{B}(u_{in_i})$ and $\text{max}(u_{in_i}) \leq \text{min}(u_{(i+1)n_{i+1}})$}

And its proof is analogous to that of the applicable parts of Theorem \ref{2.5.}.

The fact that the condition $\mathcal{M}_i^{*}=\mathcal{B}(u_{in_i})$ and $\text{max}(u_{in_i}) \leq \text{min}(u_{(i+1)n_{i+1}})$ is sufficient but not necessary is evident by considering the example with $d_1=2, d_2=3, m=4, \mathcal{M}_1^{*}=\{ X_3^2, X_3X_4,X_3X_5, X_4X_5 \}, \mathcal{M}_2^{*}=\{ X_1^3, X_1^2X_3 \}$, which makes $\mathcal{M}^{*}$ to be closed under comparability, while $\mathcal{M}_1^{*} \neq \mathcal{B}(X_4X_5)$ and $\mathcal{M}_2^{*} \neq \mathcal{B}(X_1X_3^2)$, yet they are both generating sets of Veronese type, and $K[X_1^3X_7, X_1^2X_3X_7,X_3^2X_6^2, X_3X_4X_6^2, X_3X_5X_6^2, X_4X_5X_6^2]$ a Koszul, Cohen-Macaulay, normal domain. 

\bibliographystyle{plain}
\bibliography{refPRF}

\end{document}